\documentclass{article}
\usepackage{amsmath,amssymb,amsthm}
\usepackage{authblk}
\usepackage{parskip}
\usepackage{tikz}
\usepackage[numbers]{natbib}
\usepackage[margin=2.2cm]{geometry}
\usepackage[bookmarks=false,colorlinks=true,citecolor=blue]{hyperref}

\newtheorem{theorem}{Theorem}{\bfseries}{\itshape}
\newtheorem{definition}{Definition}{\bfseries}{\itshape}
\newtheorem{lemma}{Lemma}{\bfseries}{\itshape}
\newtheorem{corollary}{Corollary}{\bfseries}{\itshape}
\newtheorem{observation}{Observation}{\bfseries}{\itshape}
\newtheorem{claim}{Claim}{\bfseries}{\itshape}
\newtheorem{remark}{Remark}{\bfseries}{\itshape}

\title{\textbf{Acyclic Chromatic Index of Chordless Graphs}}
\author{\normalsize\textbf{Manu~Basavaraju}}
\author{\textbf{Suresh~Manjanath~Hegde}}
\author{\textbf{Shashanka~Kulamarva}}
\affil{National Institute of Technology Karnataka, Surathkal-575025, India\\ Email: \texttt{manub@nitk.edu.in, smhegde@nitk.ac.in, skulamarva.187ma007@nitk.edu.in}}
\date{}

\begin{document}
	\maketitle
	
	\begin{abstract}
		\noindent An acyclic edge coloring of a graph is a proper edge coloring in which there are no bichromatic cycles. The \emph{acyclic chromatic index} of a graph $G$ denoted by $a'(G)$, is the minimum positive integer $k$ such that $G$ has an acyclic edge coloring with $k$ colors. It has been conjectured by Fiam\v{c}\'{\i}k that $a'(G) \le \Delta+2$ for any graph $G$ with maximum degree $\Delta$. Linear arboricity of a graph $G$, denoted by $la(G)$, is the minimum number of linear forests into which the edges of $G$ can be partitioned. A graph is said to be \emph{chordless} if no cycle in the graph contains a chord. Every $2$-connected chordless graph is a \emph{minimally $2$-connected graph}. It was shown by Basavaraju and Chandran that if $G$ is $2$-degenerate, then $a'(G) \le \Delta+1$. Since chordless graphs are also $2$-degenerate, we have $a'(G) \le \Delta+1$ for any chordless graph $G$. Machado, de Figueiredo and Trotignon proved that the chromatic index of a chordless graph is $\Delta$ when $\Delta \ge 3$. They also obtained a polynomial time algorithm to color a chordless graph optimally. We improve this result by proving that the acyclic chromatic index of a chordless graph is $\Delta$, except when $\Delta=2$ and the graph has a cycle, in which case it is $\Delta+1$. We also provide the sketch of a polynomial time algorithm for an optimal acyclic edge coloring of a chordless graph. As a byproduct, we also prove that $la(G) = \lceil \frac{\Delta }{2} \rceil$, unless $G$ has a cycle with $\Delta=2$, in which case $la(G) = \lceil \frac{\Delta+1}{2} \rceil = 2$. To obtain the result on acyclic chromatic index, we prove a structural result on chordless graphs which is a refinement of the structure given by Machado, de Figueiredo and Trotignon for this class of graphs. This might be of independent interest.\\
		
		\noindent Keywords: \textit{Acyclic chromatic index; Acyclic edge coloring; Chordless graphs; Linear arboricity; Minimally 2-connected graphs}
	\end{abstract}
	
	\section{Introduction}
	All graphs considered in this paper are finite and simple. A \emph{path} in a graph $G$ is a sequence of distinct vertices of $G$ such that the consecutive vertices in the sequence are adjacent. A \emph{cycle} in a graph is a path together with an edge between the starting vertex and the ending vertex of the path. A \emph{proper edge coloring} of a graph $G=(V,E)$, with a given set of colors $C$, is a function $c:E \rightarrow C$ such that $c(e_1) \ne c(e_2)$ for any adjacent edges $e_1$ and $e_2$. The minimum number of colors required to perform a proper edge coloring of a given graph $G$ is called the \emph{chromatic index} of $G$ and is denoted by $\chi'(G)$. A \emph{linear forest} is a graph without cycles, in which every vertex has degree at most two. In other words, a linear forest is a disjoint union of paths. A proper edge coloring of a graph $G$ is said to be \emph{acyclic} if there are no bichromatic cycles in $G$ or equivalently if the union of any two color classes induces a linear forest in $G$. The \emph{acyclic chromatic index} (also called as \emph{acyclic edge chromatic number}) of a graph $G$ is the minimum number of colors required to perform an acyclic edge coloring of $G$ and is denoted by $a'(G)$. The concept of acyclic coloring was first introduced by \citet{Grunbaum1973ACP}. The acyclic chromatic index and its vertex analogue can be used to bound other parameters like oriented chromatic number \cite{Kostochka1997AC} and star chromatic number of a graph, both of which have many practical applications including wavelength routing in optical networks \cite{Amar2001Ntwk}. By Vizing’s theorem \cite{Diestel2017GT}, we have $\Delta \le \chi'(G) \le \Delta+1$ where $\Delta=\Delta(G)$ denotes the maximum degree of a vertex in the graph $G$. Since any acyclic edge coloring is also a proper edge coloring, we have $a'(G) \ge \chi'(G) \ge \Delta$.
	
	It has been conjectured by \citet{Fiamvcik1978AC} (and independently by \citet*{Alon2001ACI}) that $a'(G) \le \Delta+2$ for any graph $G$. The best known upper bound for $a'(G)$ for an arbitrary graph till date is $\lceil3.74(\Delta-1)\rceil+1$ given by \citet{Giotis2017AECLLL} which is obtained by using probabilistic techniques. Even though this bound is far from the conjectured bound, the conjecture has been proved for some special classes of graphs. \citet{Alon2001ACI} proved that there exists a constant $k$ such that $a'(G) \le \Delta+2$ for any graph $G$ with girth at least $k\Delta\log\Delta$.
	
	It is known that finding the chromatic index of a graph is NP-Complete even for special classes of graphs. As per \citet{Holyer1981NPCompEC}, it is NP-Complete even for cubic graphs. \citet{Leven1983NPComCI} proved that it is NP-complete to determine whether it is possible to color the edges of a regular graph of degree $k$ with $k$ colors for any $k \ge 3$. Therefore, it is interesting to find out the classes of graphs where the problem is solvable in polynomial time. There are polynomial time algorithms to find the chromatic index of bipartite graphs, series parallel graphs and so on. \citet{Machado2013ECTCChordless} studied the problem in case of chordless graphs.
	
	An edge $e=uv$ of a graph $G$ is said to be a \emph{chord} if the vertices $u$ and $v$ are part of a cycle in $G \setminus e$ which is obtained by deleting the edge $e$ from $G$. A graph is said to be \emph{chordless} if it does not contain a chord. A $2$-connected graph $G$ is said to be \emph{minimally $2$-connected} if $G \setminus e$ is no longer $2$-connected for any edge $e$ in $G$. A chordless graph which is $2$-connected is minimally $2$-connected. The study of structural properties of chordless graphs and minimally $2$-connected graphs can be found in \cite{Dirac1967Min2conn}, \cite{Machado2013ECTCChordless} and \cite{Plummer1968MinBlock}.
	
	\citet{Machado2013ECTCChordless} proved that the edges of any connected chordless graph can be colored using exactly $\Delta$ colors unless $G$ is an odd cycle. They refined the structure of chordless graphs as given by \citet{Leveque2012NoIndK4}, which helped them in proving the result. Since any proper coloring requires at least $\Delta$ colors, this means that the chromatic index of any chordless graph can be determined exactly in polynomial time. \citet{Machado2013ECTCChordless} also gave a polynomial time algorithm to color the given chordless graph with optimum number of colors.
	
	The acyclic chromatic index has been determined exactly for some classes of graphs like outer planar graphs when $\Delta \ne 4$ (\citet{Hou2013AECOuterPlanarErr}, \citet{Hou2010AECOuterPlanar}), series-parallel graphs when $\Delta \ne 4$ (\citet{Wang2011ACIK4MinorFree}), planar graphs with girth at least 5 and $\Delta \ge 19$ (\citet{Basavaraju2011AECPlanar}) and planar graphs with $\Delta \ge 4.2 \times 10 ^{14}$ (\citet{Cranston2019AECPlanar}). In case of outer planar graphs and series-parallel graphs, $a'(G)=\Delta$ if $\Delta \ge 5$ and when $\Delta=3$, they characterize the graphs that require $4$ colors. Note that in case of acyclic edge coloring, at most one color class can be a perfect matching since otherwise the two color classes which are perfect matchings will give rise to at least one cycle when we take their union. This implies that for any regular graph $G$, we have $a'(G) \ge \Delta(G) +1$. Thus any cubic graph requires at least 4 colors for acyclic edge coloring. We also know that any cubic graph can be colored using at most 5 colors.  \citet{Andersen2012AECCubic} proved that any cubic graph other than $K_4$ or $K_{3,3}$ can be colored using 4 colors which determines the acyclic chromatic index of cubic graphs exactly. Further, we note that all the above mentioned results are constructive in nature and hence, they also yield a polynomial time algorithm for the optimum coloring.
	
	A graph $G$ is said to be $k$-degenerate, if every subgraph of $G$ has a vertex of degree at most $k$. Observe that if a graph $G$ is chordless, then since every subgraph of $G$ is also chordless, Lemma~\ref{lem:2-sparseMinimumX} guarantees that every subgraph of $G$ has a vertex of degree at most $2$. Therefore, the class of chordless graphs is a proper subclass of the class of $2$-degenerate graphs. \citet{Basavaraju2010AEC2deg} proved that the edges of any $2$-degenerate graph can be acyclically colored using $\Delta+1$ colors. Therefore, $a'(G) \le \Delta+1$ for a chordless graph $G$. Thus we know that the acyclic chromatic index is either $\Delta$ or $\Delta+1$. 
	
	The acyclic chromatic index of any cubic graph can be determined exactly. But \citet{Alon2002AlgoAC} proved that it is NP-Complete to determine $a'(G)$ even when $\Delta=3$. We can infer that it is NP-Complete to determine $a'(G)$ when $\Delta=3$ and $G$ is not cubic. But these graphs are $2$-degenerate. Thus it is NP-complete to determine $a'(G)$ for $2$-degenerate graphs.
	
	\textbf{Result:}
	
	In this paper, we consider the chordless graphs and prove that the acyclic chromatic index can be determined exactly. This improves the result by \citet{Machado2013ECTCChordless}. To achieve this, we refine the structure of chordless graphs obtained in \cite{Machado2013ECTCChordless}. This result (Lemma \ref{lem:SpecialSplit})  might be of independent interest where we need a more refined structure than that is required for proper edge coloring. In particular, we prove the following theorem.
	
	\begin{theorem}\label{thm:ACIChordless}
		Let $G$ be a connected chordless graph with maximum degree $\Delta$. Then $a'(G)=\Delta$, unless $G$ is a cycle $($in which case $a'(C_n)=\Delta+1=3)$.
	\end{theorem}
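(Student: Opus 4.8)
The plan is to prove Theorem~\ref{thm:ACIChordless} by induction on $|E(G)|$, using the refined structure of Lemma~\ref{lem:SpecialSplit} to locate a reducible configuration. First I would dispose of the two easy regimes. If $\Delta=2$ then $G$ is a path or a cycle; a path has no cycles and admits an obvious proper $2$-coloring, giving $a'(G)=2=\Delta$, while for a cycle any $2$-coloring leaves the whole cycle bichromatic, so $a'(C_n)=3=\Delta+1$, which is the stated exception. Thus it remains to treat connected chordless graphs that are not cycles with $\Delta\ge 3$. Lemma~\ref{lem:2-sparseMinimumX} supplies a vertex of degree at most $2$, and more usefully Lemma~\ref{lem:SpecialSplit} hands us a controlled configuration $H$ to remove---either a pendant edge or a maximal path $x=w_0,w_1,\dots,w_t=y$ whose internal vertices $w_1,\dots,w_{t-1}$ all have degree $2$, together with information about the degrees of the endpoints $x$ and $y$.

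The reduction itself is clean. Deleting the internal vertices (or the pendant edge) produces a subgraph $G'$, which is again chordless and satisfies $\Delta(G')\le\Delta$. By the induction hypothesis $G'$ has an acyclic edge coloring with $\Delta(G')$ colors; since a path needs $2$ colors and a cycle needs $3\le\Delta$ colors, in every case I obtain an acyclic edge coloring of $G'$ with the full palette of $\Delta$ colors. The crucial structural observation for extending this coloring is that every internal vertex $w_i$ has degree $2$, so any cycle of $G$ meeting an edge of the removed path must traverse the entire path from $x$ to $y$. Consequently a newly created bichromatic cycle must contain all of $x=w_0,\dots,w_t=y$, and in particular the path $w_0\cdots w_t$ must itself be bichromatic. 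This immediately settles the easy cases: a pendant edge lies on no cycle and is colored with any of the (at least one) colors free at its endpoint; if $x$ and $y$ lie in different components of $G'$ no $x$--$y$ path exists and a proper coloring of the path suffices; and whenever $t\ge 3$ I may color the path with at least three colors while respecting the at most $\Delta-1$ forbidden colors at each of $x$ and $y$, so that no cycle through the path can be bichromatic.

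The main obstacle, and the reason a structure finer than the one used for proper edge coloring is needed, is the tight case $t=2$: a single degree-$2$ vertex $w$ with neighbors $x,y$, where the two edges $xw$ and $wy$ necessarily receive distinct colors $c_1,c_2$, so the removed path is unavoidably bichromatic. I must then choose $c_1,c_2$ so that $G'$ contains no $(c_1,c_2)$-bichromatic $x$--$y$ path whose end-colors are $c_2$ at $x$ and $c_1$ at $y$, since such a path would close up with $xwy$ into a bichromatic cycle. When $x$ or $y$ has degree strictly below $\Delta$ there is a free color to spare and a suitable choice is routine; the delicate situation is when both $x$ and $y$ have degree exactly $\Delta$, forcing $c_1$ and $c_2$ to be the unique colors missing at $x$ and $y$ respectively and leaving no freedom. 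Here I expect to rely on the extra guarantees of Lemma~\ref{lem:SpecialSplit}---either that this doubly-saturated configuration does not arise as the chosen $H$, or that it comes equipped with a nearby edge along which a Kempe-type color swap frees a color at an endpoint. The heart of the argument is to verify that performing such a swap, or recoloring a maximal bichromatic path incident to $x$, does not itself introduce a new bichromatic cycle; the chordless hypothesis is exactly what controls this, since every relevant two-colored path is an induced path and the absence of chords sharply restricts how these paths can interact. Establishing that the refined split always provides this room is the technical core of the proof.
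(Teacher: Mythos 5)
Your outline gets the easy parts right---the $\Delta\le 2$ base cases, induction on the number of edges, reducing to the $2$-connected case, and the observation that a path of internal degree-$2$ vertices can only lie on cycles that traverse it entirely, so that $t\ge 3$ (which the paper instead eliminates up front by contracting an edge joining two degree-$2$ vertices) is harmless. But the proof has a genuine gap exactly where you say the ``technical core'' lies: the case $t=2$, a single degree-$2$ vertex $w$ adjacent to $x$ and $y$, when the colors $c_1,c_2$ are forced and a $(c_1,c_2)$-critical $x$--$y$ path exists. You write that you ``expect to rely on the extra guarantees of Lemma~\ref{lem:SpecialSplit}'' to either rule this configuration out or find a Kempe swap, and that verifying the swap creates no new bichromatic cycle is controlled by chordlessness because bichromatic paths are induced. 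None of this is carried out, and it is not routine: the swap along a maximal bichromatic path \emph{can} create a new bichromatic cycle at its far end, and the paper needs a separate lemma (Lemma~\ref{lem:BichromaticPathColorExchange}) with a delicate two-stage repair (a second local exchange at $v_{k-1}$, then a case analysis on whether the free color at $s$ is $\beta$ or $\gamma$) to fix it. That lemma in turn requires strong hypotheses on the path---alternate vertices of degree $2$, and \emph{all} neighbors of the higher-degree vertices on the path of degree $2$---which is why the argument must be run inside a $2$-sparse block and why the paper proves bipartiteness/parity facts (Observations~\ref{obs:2sparseBipartite} and~\ref{obs:PathLengthEven}) and Claim~\ref{clm:TwoPaths} to guarantee that some maximal bichromatic path from $x$ avoids both cutset vertices. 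You supply no mechanism for producing a path on which a swap is safe.

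A secondary problem is that you misread what Lemma~\ref{lem:SpecialSplit} delivers. It does not hand you ``a pendant edge or a maximal path of degree-$2$ internal vertices''; it produces a split $(X,Y,a,b)$ of a proper $2$-cutset whose block $G_X(a,b)$ is $2$-sparse, is not a $K_{2,t}$ with $t\ge 3$, and has prescribed degrees at $a$ and $b$. The exclusion of $K_{2,t}$ is precisely what kills the doubly-saturated $t=2$ configuration you are worried about, but extracting a colorable edge from the split still requires the whole Case-ii analysis of the paper, including the residual $K_{2,3}$ situation at $\Delta=3$ (Subcase-ii), where the paper abandons the extension strategy altogether and recolors a small block from scratch. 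Since your proposal defers all of this to ``the heart of the argument'' without providing it, it is an outline of where a proof should go rather than a proof.
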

	
	\begin{remark}\label{rem:ConnComp}
		Since the acyclic edge coloring of the components of a graph can be easily extended to the coloring of the whole graph, it is enough to prove the statement for connected graphs. Hence, we have the following Corollary.
	\end{remark}
	
	\begin{corollary}
		Let $G$ be a chordless graph with maximum degree $\Delta$. Then $a'(G)=\Delta$, unless $G$ has a cycle with $\Delta=2$ $($in which case $a'(G)=\Delta+1=3)$.
	\end{corollary}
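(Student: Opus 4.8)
The plan is to deduce this statement from Theorem~\ref{thm:ACIChordless} by the component-wise reduction indicated in Remark~\ref{rem:ConnComp}. Write $G$ as the disjoint union of its connected components $G_1,\dots,G_k$, each of which is again chordless (being an induced subgraph of $G$), and set $\Delta_i=\Delta(G_i)$ so that $\Delta=\max_i \Delta_i$. Since the components are pairwise vertex-disjoint, $G$ has no edge joining distinct components, and therefore every cycle of $G$ lies entirely inside a single component. Consequently, if we color each $G_i$ acyclically using one common palette, the result is automatically an acyclic edge coloring of $G$: any bichromatic cycle would have to lie in a single component, where it is excluded by construction. The lower bound $a'(G)\ge\Delta$ is already recorded in the introduction via $a'(G)\ge\chi'(G)\ge\Delta$, so the remaining work is the upper bound together with pinning down the exceptional case.

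The case analysis would proceed on the value of $\Delta$. First suppose $\Delta\ge 3$. Applying Theorem~\ref{thm:ACIChordless} to each $G_i$, a component that is not a cycle receives an acyclic coloring with $\Delta_i\le\Delta$ colors, while a component that is a cycle receives one with exactly $3\le\Delta$ colors. In either case at most $\Delta$ colors suffice, so reusing a single set of $\Delta$ colors across all components yields $a'(G)\le\Delta$, and together with the lower bound we obtain $a'(G)=\Delta$.

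It remains to treat $\Delta=2$, where each component is a path or a cycle. If $G$ has no cycle, then every component is a path, colored acyclically with at most $2$ colors by Theorem~\ref{thm:ACIChordless}, giving $a'(G)=\Delta=2$. If instead $G$ contains a cycle, then some component $G_j$ is a cycle $C_n$; restricting any acyclic coloring of $G$ to $G_j$ gives an acyclic coloring of $C_n$, which by the theorem requires $3$ colors, so $a'(G)\ge 3=\Delta+1$. Conversely every component (path or cycle) admits an acyclic coloring with at most $3$ colors, and sharing a single palette of $3$ colors gives $a'(G)\le 3$; hence $a'(G)=\Delta+1=3$.

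Since the reduction is essentially an observation, I do not expect a genuine obstacle here. The only point needing care is the claim that combining component colorings over a common palette creates no bichromatic cycle, which rests squarely on the absence of edges between distinct components; and the only subtlety in the bookkeeping is that a cycle component needs $3$ colors, so it is harmless exactly when $\Delta\ge 3$ but forces the value $\Delta+1$ precisely when $\Delta=2$ and a cycle is present. The trivial cases $\Delta\le 1$, where $G$ is edgeless or a matching and contains no cycle, fall under $a'(G)=\Delta$.
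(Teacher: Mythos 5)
Your proposal is correct and follows exactly the route the paper intends: Remark~\ref{rem:ConnComp} deduces the corollary from Theorem~\ref{thm:ACIChordless} by coloring each connected component separately over a common palette and noting that bichromatic cycles cannot cross components. Your write-up merely spells out the case analysis (a cycle component costing $3$ colors is absorbed when $\Delta\ge 3$ but forces $\Delta+1$ when $\Delta=2$) that the paper leaves implicit.
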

	
	\begin{remark}
		Linear arboricity of a graph $G$ denoted by $la(G)$, is the minimum positive integer $k$ such that the edges of $G$ can be partitioned into $k$ linear forests. It is conjectured by \citet{Akiyama1980LA} that $la(G) \le \lceil \frac{\Delta + 1 }{2} \rceil$. Since the union of any two color classes in acyclic edge coloring is a linear forest, we can infer that $la(G) \le \lceil \frac{a'(G)}{2} \rceil$. By a result of \citet{Basavaraju2010AEC2deg}, the conjecture is true for $2$-degenerate graphs. Our result in this paper shows that the linear arboricity of chordless graphs can be determined exactly, i.e., for a chordless graph $G$, we have $la(G) = \lceil \frac{ \Delta}{2} \rceil$ unless $\Delta=2$ and $G$ contains a cycle, in which case $la(G) = \lceil \frac{\Delta+1}{2} \rceil = 2$. Thus we have the following Corollary.
	\end{remark}
	
	\begin{corollary}
		If $G$ is a chordless graph with maximum degree $\Delta$, then $la(G) = \lceil \frac{\Delta }{2} \rceil$, unless $G$ has a cycle with $\Delta=2$ $($in which case $la(G) = \lceil \frac{\Delta+1}{2} \rceil = 2)$.
	\end{corollary}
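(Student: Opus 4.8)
The plan is to sandwich $la(G)$ between matching upper and lower bounds, with the upper bound inherited directly from the acyclic edge coloring results already established. For the upper bound I would invoke the observation recorded in the preceding remark: since the union of any two color classes of an acyclic edge coloring induces a linear forest, an optimal acyclic edge coloring with $a'(G)$ colors can be converted into a partition of $E(G)$ into linear forests by grouping the color classes into pairs (a leftover singleton class, being a matching, is itself a linear forest). This yields $la(G) \le \lceil a'(G)/2 \rceil$. Substituting the values of $a'(G)$ supplied by the Corollary to Theorem~\ref{thm:ACIChordless} then gives $la(G) \le \lceil \Delta/2 \rceil$ whenever $G$ is not a cycle-bearing graph with $\Delta=2$, and $la(G) \le \lceil 3/2 \rceil = 2$ in the exceptional case where $\Delta=2$ and $G$ contains a cycle.

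For the matching lower bound in the generic case, I would use the standard degree argument. Fix a vertex $v$ of degree $\Delta$; in any linear forest every vertex has degree at most two, so each linear forest in a partition can account for at most two of the $\Delta$ edges incident to $v$. Hence at least $\lceil \Delta/2 \rceil$ linear forests are required, that is $la(G) \ge \lceil \Delta/2 \rceil$. Combined with the upper bound, this settles $la(G) = \lceil \Delta/2 \rceil$ for every chordless graph that is not a cycle-bearing graph with $\Delta=2$. One also checks that the formula continues to hold in the degenerate small cases $\Delta \le 1$ and in the case $\Delta=2$ with no cycle, where $G$ is itself a disjoint union of paths and hence a single linear forest.

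The exceptional case needs a separate lower bound, because the degree argument gives only $\lceil 2/2 \rceil = 1$, which is too weak. Here I would argue that a graph containing a cycle cannot be a single linear forest, since linear forests are acyclic by definition; therefore at least two linear forests are needed, giving $la(G) \ge 2$. Together with the upper bound $la(G) \le 2$, this yields $la(G) = 2 = \lceil (\Delta+1)/2 \rceil$, completing the exceptional case and thus the whole statement.

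I expect no substantial obstacle here, since Theorem~\ref{thm:ACIChordless} and its Corollary carry essentially all of the analytic weight. The only point requiring genuine care is the $\Delta=2$ cyclic case, where the generic maximum-degree lower bound fails to separate $1$ from the correct value $2$ and must be replaced by the elementary observation that the presence of a cycle rules out a single-forest partition.
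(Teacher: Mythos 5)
Your proposal is correct and follows essentially the same route as the paper, which establishes the corollary in the remark preceding it via the bound $la(G) \le \lceil a'(G)/2 \rceil$ obtained by pairing color classes, combined with Theorem~\ref{thm:ACIChordless}. The paper leaves the matching lower bounds (the degree argument at a vertex of degree $\Delta$, and the observation that a cycle forces at least two linear forests) implicit, but your explicit treatment of them, including the exceptional $\Delta=2$ case, is exactly what is needed.
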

	
	\begin{remark}
		From Theorem~\ref{thm:ACIChordless} and Remark~\ref{rem:ConnComp}, we can infer that $a'(G)$ can be determined exactly for chordless graphs in polynomial time. We also give a sketch of a polynomial time algorithm to acyclically color the edges of a given chordless graph with optimum number of colors.
	\end{remark}
	
	\section{Preliminaries}
	Let $G=(V,E)$ be a simple, finite and undirected graph with $n$ vertices and $m$ edges. The \emph{degree} of any vertex $x$ in $G$, represented as $deg_G(x)$ or simply $deg(x)$ (if $G$ is understood from the context), is the number of edges in $G$ which are incident on the vertex $x$. A vertex with degree zero is said to be an \emph{isolated vertex}. Maximum degree and minimum degree of $G$, represented as $\Delta(G)$ and $\delta(G)$ respectively or simply $\Delta$ and $\delta$ (if the graph $G$ is obvious), are the maximum degree and minimum degree of vertices in $G$ respectively. For any vertex $x \in V$, $N_G(x)$ is the set of all neighbors of $x$ in $G$. Throughout the paper, $N_G(x)$ is written as $N(x)$ whenever $G$ is understood from the context. Let $R$ be a subset of $E$, i.e., $R \subseteq E$. Then $G \setminus R$ is the subgraph of $G$ obtained by the vertex set $V$ and the edge set $E \setminus R$. Let $S \subseteq V$. Then $G \setminus S$ is the subgraph of $G$ obtained by the vertex set $V \setminus S$ and the edge set $E \setminus \{e \in E \text{ / } \exists v \in S \text{ such that } e \text{ is incident on } v\}$. If either $R$ or $S$ is a singleton set $\{x\}$, then we write $G \setminus \{x\}$ as $G \setminus x$ throughout the paper.
	
	A vertex $x \in V$ is said to be a \emph{cut vertex} in $G$ if $G \setminus x$ is disconnected. A graph $G$ is said to be \emph{$k$-connected} if the removal of less than $k$ vertices does not disconnect $G$. Let $S \subseteq V$. Then $G[S]$ is the subgraph of $G$ induced by the vertices in $S$, i.e., $G[S]=(V',E')$ where $V'=S$ and $E' = \{ab \in E \text{ / } a \in S, b \in S\}$. Let $e=xy$ be an edge in $G$. Then the process of deleting the vertices $x$ and $y$ from $G$ and replacing it by a new vertex $z$ such that $N(z) = (N(x) \setminus y) \cup (N(y) \setminus x)$ is called as \emph{edge contraction} corresponding to the edge $e$. If the operation results in a multi edge, we keep only one edge between those vertices. The number of edges in a path is said to be the \emph{length} of the path. Distance between any two vertices $x$ and $y$ in $G$ is the number of edges in the shortest $(x,y)$-path. An odd cycle in a graph is a closed path of odd length. Graph $G$ is said to be a \emph{bipartite graph}, if the vertex set $V$ can be partitioned into two sets $X$ and $Y$ such that every edge in $G$ has one end in $X$ and the other end in $Y$. It is proved that a graph is bipartite if and only if it does not have an odd cycle in it. See \cite{West2001IGT} for further notations and definitions. Throughout the paper, whenever just the word coloring is mentioned, it should be taken as acyclic edge coloring.
	
	The following definitions and lemmas are given by \citet{Basavaraju2010AEC2deg}. We will mention the ones which are required for our discussion below.
	
	\begin{definition}[\cite{Basavaraju2010AEC2deg}]
		An edge coloring $c$ of a subgraph $H$ of a graph $G$ is called a \textbf{partial edge coloring} of $G$.
	\end{definition}
	
	Note that $H$ can be $G$ itself. Therefore, an edge coloring of $G$ is also a partial edge coloring of $G$. Given partial edge coloring $c$ of $G$ is proper (and acyclic) if it is proper (and acyclic) in the corresponding subgraph $H$.  Note that $c(e)$ may not be defined for an edge $e$ with respect to a partial coloring $c$. So, whenever we use $c(e)$, we are considering an edge $e$ for which $c(e)$ is defined, even though it is not explicitly mentioned. Let $c$ be a partial edge coloring of $G$. For any vertex $u \in V$, we define $F_u(c) = \{c(uv) \text{ / } v \in N_G(u)\}$. For an edge $ab \in E$, we define $F_{ab}(c) = F_b(c) \setminus \{c(ab)\}$. We will abbreviate the notation as $F_u$ and $F_{ab}$ when the coloring $c$ is obvious from the context. Note that $F_{ab}$ is different from $F_{ba}$.
	
	\begin{definition}[\cite{Basavaraju2010AEC2deg}]
		An $(\alpha,\beta)$-maximal bichromatic path with respect to a partial coloring $c$ of $G$ is a maximal path in $G$ consisting of edges that are colored using the colors $\alpha$ and $\beta$ alternatingly. An $(\alpha,\beta,a,b)$-\textbf{maximal bichromatic path} is an $(\alpha,\beta)$-maximal bichromatic path which starts at the vertex $a$ with an edge colored with $\alpha$ and ends at the vertex $b$.
	\end{definition}
	
	The following lemma (mentioned as a fact in \cite{Basavaraju2010AEC2deg}) follows from the definition of acyclic edge coloring. We implicitly assume this lemma throughout the paper.
	
	\begin{lemma}[\cite{Basavaraju2010AEC2deg}]
		Given a pair of colors $\alpha$ and $\beta$ of a proper coloring $c$ of $G$, there can be at most one $(\alpha,\beta)$-maximal bichromatic path containing a particular vertex $v$ in $G$, with respect to the coloring $c$.
	\end{lemma}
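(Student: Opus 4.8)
The plan is to analyze the two-colored subgraph directly and let properness control its degrees. First I would let $H$ denote the spanning subgraph of $G$ whose edge set consists of exactly those edges that receive color $\alpha$ or color $\beta$ under $c$. Since $c$ is proper, no two edges sharing an endpoint get the same color, so every vertex of $G$ is incident with at most one $\alpha$-edge and at most one $\beta$-edge. Hence $\deg_H(x)\le 2$ for every vertex $x$, which forces each connected component of $H$ to be either a simple path or a cycle; moreover, since adjacent edges must differ in color, the colors $\alpha$ and $\beta$ alternate along each such component.

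Next I would observe that, by the definition given above, an $(\alpha,\beta)$-maximal bichromatic path is precisely a maximal path of $H$, and that any such path is contained entirely in a single connected component of $H$. The key reduction is then that the fixed vertex $v$ belongs to exactly one connected component $K$ of $H$, so every maximal bichromatic path through $v$ must lie inside $K$. It therefore suffices to show that $v$ lies on at most one maximal path of $K$.

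When $K$ is a path, I would argue that $K$ is itself the unique maximal path it determines and that it visits each of its vertices exactly once, so $v$ lies on this single path and on no other. The step requiring care is to verify that the maximal path through $v$ is actually forced: if $\deg_H(v)=2$, then any maximal path containing $v$ must use both $H$-edges at $v$, since otherwise it could be extended at $v$, contradicting maximality; if $\deg_H(v)=1$, then $v$ is an endpoint and the single incident edge is fixed. In either case, because $H$ has maximum degree two, the alternating walk leaving $v$ continues uniquely until it terminates, so the path is determined.

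The main obstacle I anticipate is the cycle case. If $K$ is a cycle, then it is a bichromatic cycle, and a naive reading would permit several distinct maximal paths, each obtained by deleting one edge of the cycle, all of them passing through $v$. This is exactly where the acyclic hypothesis enters: under an acyclic edge coloring there are no bichromatic cycles, so no component of $H$ is a cycle and this case never arises (equivalently, one adopts the convention that a cycle component carries no maximal bichromatic path). Either way, $v$ lies in a single component of $H$ and hence on at most one maximal bichromatic path, which completes the argument.
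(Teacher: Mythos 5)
Your proof is correct. The paper itself offers no argument for this lemma: it is imported from Basavaraju and Chandran and explicitly ``mentioned as a fact,'' so there is no proof to compare against, and your write-up supplies exactly the missing details. The decomposition into the two-colored subgraph $H$ of maximum degree at most $2$, the observation that its components are alternating paths or cycles, and the reduction to the single component containing $v$ is the standard (and essentially the only) way to see this. Your handling of the cycle case is the one point of substance, and you get it right: for a coloring that is merely proper, a bichromatic cycle through $v$ would yield several distinct maximal $(\alpha,\beta)$-paths through $v$ (one per deleted edge), so the statement as literally phrased with ``proper coloring'' is false without an acyclicity assumption. The paper's surrounding remark---that the lemma ``follows from the definition of acyclic edge coloring''---confirms that acyclicity of $c$ (or at least of the $(\alpha,\beta)$-union) is the intended hypothesis, and indeed every application in the paper is to a partial \emph{acyclic} coloring. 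So your proof not only closes the gap the paper leaves open but also correctly identifies where the hypothesis is actually used.
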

	
	\begin{definition}[\cite{Basavaraju2010AEC2deg}]
		An $(\alpha,\beta,a,b)$-maximal bichromatic path in a graph $G$, which ends at $b$ with an edge colored $\alpha$, is said to be an $(\alpha,\beta,ab)$-\textbf{critical path} if the vertices $a$ and $b$ are adjacent in $G$.
	\end{definition}
	
	\begin{definition}[\cite{Basavaraju2010AEC2deg}]
		Let $c$ be a partial coloring of $G$. Let $u,i,j \in V(G)$ and $ui,uj \in E(G)$. A \textbf{color exchange} with respect to the edges $ui$ and $uj$ is defined as the modification of the current partial coloring $c$ by exchanging the colors of the edges $ui$ and $uj$ to get a partial coloring $c'$, i.e., $c'(ui)=c(uj)$, $c'(uj)=c(ui)$ and for all other edges $e$ in $G$, $c'(e)=c(e)$. The color exchange with respect to the edges $ui$ and $uj$ is said to be proper if the coloring obtained after the exchange is proper. The color exchange with respect to the edges $ui$ and $uj$ is \textbf{valid} if and only if the coloring obtained after the exchange is acyclic.
	\end{definition}
	
	A color $\alpha$ is said to be a \emph{candidate} for an edge $e$ in $G$ with respect to a partial coloring $c$ of $G$ if none of the adjacent edges of $e$ are colored $\alpha$. A candidate color $\alpha$ is said to be \emph{valid} for an edge $e$ if assigning the color $\alpha$ to $e$ does not result in any bichromatic cycle in $G$. Following lemma is mentioned as a fact by \citet{Basavaraju2010AEC2deg}, since it is obvious.
	
	\begin{lemma}[\cite{Basavaraju2010AEC2deg}]\label{lem:ColorValidity}
		Let $c$ be a partial coloring of $G$. A candidate color $\beta$ is not valid for an edge $e=(a,b)$ if and only if there exists $\alpha \in F_{ba} \cap F_{ab}$ such that there is an $(\alpha,\beta,ab)$-critical path in $G$ with respect to the coloring $c$.
	\end{lemma}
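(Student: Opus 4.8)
The plan is to prove the two directions of the equivalence separately, in each case translating between a bichromatic cycle through the edge $e=ab$ and a critical path joining $a$ and $b$. Throughout I use the standing convention of the paper that $c$ is a proper \emph{acyclic} partial coloring, so that no bichromatic cycle is present before $e$ is colored; this is what guarantees that any bichromatic cycle created by coloring $e$ must actually pass through $e$.

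For the ``if'' direction, suppose some $\alpha \in F_{ba} \cap F_{ab}$ admits an $(\alpha,\beta,ab)$-critical path $P$ with respect to $c$. By definition $P$ is a maximal $(\alpha,\beta)$-bichromatic path starting at $a$ with an $\alpha$-edge and ending at $b$ with an $\alpha$-edge, so both of its end-edges are colored $\alpha$. Since $\beta$ is a candidate for $e$, the edge $ab$ is uncolored and hence does not occur on $P$, so $P$ together with $ab$ is a genuine cycle. Assigning $\beta$ to $ab$ leaves the coloring proper (no edge adjacent to $ab$ is colored $\beta$, by the candidate hypothesis), and the resulting cycle uses only the colors $\alpha$ and $\beta$: at both $a$ and $b$ the new $\beta$-edge $ab$ meets an $\alpha$-edge of $P$. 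Thus a bichromatic cycle appears, and $\beta$ is not valid.

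For the ``only if'' direction, suppose $\beta$ is not valid for $e$. Then coloring $ab$ with $\beta$ produces a bichromatic cycle $Z$; as $c$ was acyclic, $Z$ must traverse $ab$, and since $ab$ carries $\beta$ the second color of $Z$ is some $\alpha \neq \beta$. Deleting $ab$ from $Z$ leaves an $(\alpha,\beta)$-bichromatic path $P$ from $a$ to $b$. Properness at $a$ and at $b$ forces the edges of $P$ incident to these two vertices to be colored $\alpha$, since they cannot repeat the color $\beta$ of $ab$. In particular $\alpha$ occurs on an edge at $a$ other than $ab$ and on an edge at $b$ other than $ab$, which is exactly the statement $\alpha \in F_{ba} \cap F_{ab}$.

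It remains to check that $P$ is a \emph{maximal} bichromatic path with respect to $c$, and this is the one delicate point. The candidate hypothesis is precisely what is needed: with respect to $c$ there is no $\beta$-colored edge incident to $a$ or to $b$ apart from the still-uncolored $ab$. Because $P$ terminates at $a$ and at $b$ with $\alpha$-edges, any prolongation would have to continue along a $\beta$-edge at that endpoint, and none exists; hence $P$ cannot be extended and is a maximal $(\alpha,\beta)$-bichromatic path starting and ending with $\alpha$. Since $a$ and $b$ are adjacent, $P$ is an $(\alpha,\beta,ab)$-critical path, which completes the equivalence. I expect the maximality step to be the main obstacle, since one must invoke that $\beta$ is a candidate (so the endpoints have no free $\beta$-edges) rather than merely the fact that $Z$ closes into a cycle.
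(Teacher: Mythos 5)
Your proof is correct. The paper itself gives no argument for this lemma (it is imported from the cited reference and declared obvious), so there is nothing to diverge from; your two-directional argument is the standard one, and you correctly isolate the only non-trivial point, namely that the candidate hypothesis (no $\beta$-edge at $a$ or $b$) is what makes the path obtained from the bichromatic cycle \emph{maximal}, together with the standing assumption that the partial coloring is acyclic so that any bichromatic cycle must pass through the newly colored edge.
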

	
	\section{Structure of Chordless Graphs}
	We will start with a strict subclass of chordless graphs before actually moving into the structure of chordless graphs.
	
	\begin{definition}
		A graph $G$ is \emph{$2$-sparse} if every edge of $G$ is incident on at least one vertex of degree at most $2$.
	\end{definition}
	
	Observe that a $2$-sparse graph is also chordless since a chord of a cycle in a graph is an edge whose end vertices have degree at least 3.
	
	\begin{definition}
		A \emph{proper $2$-cutset} of a connected graph $G=(V,E)$ is a pair of non-adjacent vertices $(a,b)$ such that $V$ can be partitioned into non-empty sets $X$, $Y$ and $\{a,b\}$ so that there is no edge between any vertex in $X$ and any vertex in $Y$ and both $G[X \cup \{a,b\}]$ and $G[Y \cup \{a,b\}]$ contain an $ab$-path but neither $G[X \cup \{a,b\}]$ nor $G[Y \cup \{a,b\}]$ is an induced path. Then $(X,Y,a,b)$ is called a \emph{split} of the proper $2$-cutset $(a,b)$. The block $G_X(a,b)$ $($respectively $G_Y(a,b))$ is the graph obtained by taking $G[X \cup \{a,b\}]$ $($respectively $G[Y \cup \{a,b\}])$ and adding a new vertex $w$ called as the marker vertex, adjacent to both $a$ and $b$. 
	\end{definition}
	
	The structure of chordless graphs and minimally $2$-connected graphs were studied by \citet{Dirac1967Min2conn} and \citet{Plummer1968MinBlock}. Recently \citet{Leveque2012NoIndK4} gave another structural property of chordless graphs. In particular, they proved the following lemma.
	
	\begin{lemma}[\cite{Leveque2012NoIndK4}]\label{lem:2-sparseOR2-cutset}
		If $G$ is a $2$-connected chordless graph, then either $G$ is $2$-sparse or $G$ admits a proper $2$-cutset.
	\end{lemma}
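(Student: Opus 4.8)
The plan is to argue by contradiction on the failure of both alternatives: assume $G$ is $2$-connected and chordless but \emph{not} $2$-sparse, and then exhibit a proper $2$-cutset. Since $G$ is not $2$-sparse, there is an edge $ab$ with $\deg(a)\ge 3$ and $\deg(b)\ge 3$. As $G$ is $2$-connected it is $2$-edge-connected, so $G\setminus ab$ is connected; and as $G$ is chordless the edge $ab$ is not a chord, so $a$ and $b$ lie on no common cycle of $G\setminus ab$. Two internally disjoint $a$--$b$ paths in $G\setminus ab$ would union to such a cycle, so there is at most one, and by Menger's theorem applied to the non-adjacent pair $a,b$ in the connected graph $G\setminus ab$ there must be a single cut vertex $c\notin\{a,b\}$ whose deletion separates $a$ from $b$. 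I would then let $A$ be the component of $a$ in $(G\setminus ab)\setminus c$ and set $B=V\setminus(A\cup\{c\})$, so that $b\in B$, the only edge of $G$ joining $A$ and $B$ is $ab$, and $c$ has a neighbour in each of $A$ and $B$ (otherwise $A$ or $B$ would be a separate component of the connected graph $G\setminus ab$).

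The key intermediate step is to show $a\not\sim c$ (and symmetrically $b\not\sim c$). Suppose $a\sim c$. If $c$ also had a neighbour $a'\in A\setminus\{a\}$, then, using that $A$ is connected, there is an $a$--$c$ path $P_1$ inside $A\cup\{c\}$ ending with the edge $a'c$ and avoiding the edge $ac$; letting $P_2$ be the edge $ab$ followed by a $b$--$c$ path inside $B\cup\{c\}$ (which exists because every $a$--$b$ path of $G\setminus ab$ runs through $c$), I obtain two internally disjoint $a$--$c$ paths in $G\setminus ac$, i.e.\ a cycle through $a$ and $c$, making $ac$ a chord and contradicting chordlessness. Hence $a$ would be the only neighbour of $c$ in $A$; but then every edge of $G$ leaving $A$ is incident to $a$ (namely $ab$ to $B$ and $ac$ to $c$), so the non-empty set $A\setminus\{a\}$ is cut off from the rest of $G$ by the single vertex $a$, contradicting $2$-connectivity. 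Therefore $a\not\sim c$.

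Finally, with $a\not\sim c$ I would verify that $(a,c)$ is a proper $2$-cutset with split $(X,Y,a,c)$, where $X=A\setminus\{a\}$ and $Y=B$. The parts are non-empty ($a$ has a neighbour in $A$, since $\deg(a)\ge 3$ while at most the two neighbours $b,c$ lie outside $A$; and $b\in B$), there are no $X$--$Y$ edges (the unique $A$--$B$ edge $ab$ is incident to $a\notin X$), and each of $G[A\cup\{c\}]$ and $G[B\cup\{a,c\}]$ contains an $a$--$c$ path (through the connected set $A$ on one side, and via $ab$ followed by a $b$--$c$ path through $B$ on the other). The delicate point, which I expect to be the main obstacle and the place where the degree hypothesis is genuinely used, is checking that neither induced subgraph is a path. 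For $G[B\cup\{a,c\}]$ this is immediate, since $b$ retains all of its $\ge 3$ neighbours inside $B\cup\{a,c\}$. For $G[A\cup\{c\}]$ I would observe that every vertex of $A\setminus\{a\}$ keeps its full $G$-degree (at least $2$, by $2$-connectivity) inside $A\cup\{c\}$ and so cannot be a degree-$1$ endpoint; since $a\not\sim c$ gives $\deg_{G[A\cup\{c\}]}(a)=\deg(a)-1\ge 2$, the vertex $a$ is not an endpoint either, leaving only $c$ as a candidate endpoint, which is impossible for a path on at least three vertices. Thus $G[A\cup\{c\}]$ is not an induced path, $(a,c)$ is a proper $2$-cutset, and the proof is complete.
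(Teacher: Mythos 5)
The paper does not actually prove this lemma: it is quoted verbatim from \cite{Leveque2012NoIndK4} and used as a black box, so there is no in-paper argument to measure yours against. What you have written is a self-contained elementary replacement for the citation, and after checking each step I believe it is correct. Your route exploits the definition of a chord directly: an edge $ab$ with both endpoints of degree at least $3$ exists exactly when $G$ is not $2$-sparse; chordlessness of $ab$ forbids two internally disjoint $a$--$b$ paths in the connected graph $G\setminus ab$, so Menger's theorem yields a single separating vertex $c$; chordlessness of $ac$ together with $2$-connectivity then forces $a\not\sim c$; and the verification that $(A\setminus\{a\},B,a,c)$ is a split of a proper $2$-cutset is where the degree hypotheses on $a$ and $b$ do their work (non-emptiness of $A\setminus\{a\}$, and the ``not an induced path'' condition on both sides), exactly as you flag. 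Two cosmetic points: the non-emptiness of $A\setminus\{a\}$ is already needed in your second paragraph (to conclude that $a$ is a cut vertex when it is the unique neighbour of $c$ in $A$) but is only justified in the third; and the existence of a $b$--$c$ path inside $B\cup\{c\}$ is cleaner to derive from the fact that $c$ must have a neighbour in the component of $b$ in $(G\setminus ab)\setminus c$ (else that component is cut off from the connected graph $G\setminus ab$) than from the parenthetical you give. Neither affects correctness. What your approach buys is a short, self-contained proof using only first principles (Menger plus the definition of chord); what the citation buys the authors is brevity and alignment with the structural framework of \cite{Leveque2012NoIndK4} on which their subsequent refinements (Lemma~\ref{lem:2-sparseMinimumX} and Lemma~\ref{lem:SpecialSplit}) are built.
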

	
	This is a useful property in proving many results on chordless graphs. But when \citet{Machado2013ECTCChordless} tried to study the edge coloring of chordless graphs, they needed a refined structure with respect to the proper $2$-cutsets that are mentioned in the above statement. Hence, they proved the following lemma.
	
	\begin{lemma}[\cite{Machado2013ECTCChordless}]\label{lem:2-sparseMinimumX}
		Let $G$ be a $2$-connected, not $2$-sparse chordless graph. Let $(X,Y,a,b)$ be a split of a proper $2$-cutset of $G$ such that $|X|$ is minimum among all possible such splits. Then both $a$ and $b$ have at least two neighbors in $X$ and $G_X(a,b)$ is $2$-sparse.
	\end{lemma}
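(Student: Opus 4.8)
The plan is to treat the two assertions in turn, using the $2$-sparseness of $G_X(a,b)$ as the engine that also drives the degree statement. First I would record the auxiliary facts used throughout: that $G_X(a,b)$ is again $2$-connected and chordless (so Lemma~\ref{lem:2-sparseOR2-cutset} applies to it), that the marker vertex $w$ has degree exactly $2$ with $N(w)=\{a,b\}$, and that both $G[X\cup\{a,b\}]$ and $G[Y\cup\{a,b\}]$ are connected. Connectivity of the two blocks follows from $2$-connectivity of $G$ together with the fact that $\{a,b\}$ separates $X$ from $Y$; chordlessness of $G_X(a,b)$ holds because the new edges $wa,wb$ are incident to the degree-$2$ vertex $w$ and hence are not chords, while adding $w$ creates no new chorded cycle. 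Writing $H=G[X\cup\{a,b\}]$, I would also note that $|X|\ge 2$, since $|X|=1$ would force $H$ to be the induced path $a\,x\,b$, contradicting the definition of a proper $2$-cutset.

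For the $2$-sparseness, by Lemma~\ref{lem:2-sparseOR2-cutset} it suffices to show that $G_X(a,b)$ admits no proper $2$-cutset, and I would prove this by showing that any proper $2$-cutset $(p,q)$ of $G_X(a,b)$, with split $(P,Q,p,q)$, yields a proper $2$-cutset of $G$ whose first part is a proper subset of $X$, contradicting the minimality of $|X|$. The clean case is when $w$ lies strictly inside one side, say $w\in P$; then $a,b\in P\cup\{p,q\}$, so $Q\subseteq X$, and I would check directly that $(p,q)$ is a proper $2$-cutset of $G$ with first part $Q$. The only points needing care are that neither new block is an induced path (for the $Q$-side this is inherited; for the other side one uses that $G[Y\cup\{a,b\}]$ is a connected induced subgraph that is not an induced path, and a connected induced subgraph of a path is a path) and that $|Q|<|X|$, which holds because $|P|\ge 2$ — indeed $P=\{w\}$ would make the $P$-block the induced path $a\,w\,b$.

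The hard part, and the main obstacle, is the case in which the marker $w$ is itself one of the two cutset vertices, say $q=w$. A short argument using $2$-connectivity of $G_X(a,b)$ shows that then $a$ and $b$ lie on opposite sides and that the partner $p$ is a cut vertex of $H$ separating $a$ from $b$, with the two components being exactly $P=\{a\}\cup(P\cap X)$ and $Q=\{b\}\cup(Q\cap X)$. I would then reduce using $(p,a)$ with first part $P\cap X$, or symmetrically $(p,b)$ with first part $Q\cap X$, clearing the induced-path obstructions via $2$-connectivity exactly as before. This succeeds unless $p$ is adjacent to \emph{both} $a$ and $b$, and that last configuration is precisely where chordlessness must be invoked: since $G$ is $2$-connected, $P\cap X$ attaches both to $a$ and to $p$, so there is an $a$--$p$ path with all interior vertices in $P\cap X$; together with the $a$--$p$ path running through $Y$ and then the edge $bp$, these give two internally disjoint $a$--$p$ paths avoiding the edge $ap$, so $ap$ is a chord, a contradiction. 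Hence $G_X(a,b)$ has no proper $2$-cutset and is therefore $2$-sparse.

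Finally, for the degree statement I would argue by contradiction: suppose $a$ has a unique neighbor $x$ in $X$, so $a$ is a pendant of $H$ at $x$. The natural move is to use $(x,b)$ as a smaller proper $2$-cutset of $G$ with first part $X\setminus\{x\}$, and its failure modes are limited. If the $X$-side block $G[X\cup\{b\}]$ were an induced path, then $x$ would be internal in it and the branch not containing $b$ would be separated from everything else by $x$, making $x$ a cut vertex of $G_X(a,b)$; if the other block were an induced path, then $G[Y\cup\{a,b\}]$ would be an induced path; both are impossible. The only surviving obstruction is $x\sim b$, and here I would invoke the $2$-sparseness just proved: the edge $xb$ lies on the induced $4$-cycle $a\,x\,b\,w$, and since $\deg_{G_X(a,b)}(b)\ge 3$ (a separate short argument rules out $\deg(b)=2$), $2$-sparseness forces $N_G(x)=\{a,b\}$; but then $X\setminus\{x\}$ is separated from the rest of $G$ by $b$ alone, making $b$ a cut vertex of $G$, a contradiction. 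The symmetric argument for $b$ then completes the proof.
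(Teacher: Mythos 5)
The paper does not actually prove this lemma --- it is quoted verbatim from Machado, de Figueiredo and Trotignon --- so there is no in-paper argument to measure yours against; judged on its own terms, your proposal is the right reconstruction and is essentially sound. The strategy of applying Lemma~\ref{lem:2-sparseOR2-cutset} to $G_X(a,b)$ (after checking it is $2$-connected and chordless) and showing that any proper $2$-cutset of $G_X(a,b)$ transfers back to a split of $G$ with a strictly smaller first part is the natural way to exploit the minimality of $|X|$, and you correctly isolate the genuinely delicate configurations: the cutset containing the marker $w$, and the case where its partner $p$ is adjacent to both $a$ and $b$, which is exactly where chordlessness must enter. Two spots need tightening. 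First, in the degree argument you assert that if $G[X\cup\{b\}]$ were an induced path then $x$ ``would be internal in it''; this is not forced, but the endpoint cases also die: an endpoint other than $x$ or $b$ would be a degree-one vertex of the $2$-connected graph $G$ (since $x$ is $a$'s only neighbour in $X$), and if the endpoints are exactly $x$ and $b$ then attaching $a$ at $x$ exhibits $G[X\cup\{a,b\}]$ itself as an induced path, contradicting the definition of a proper $2$-cutset. Second, the ``separate short argument'' that $\deg_{G_X(a,b)}(b)\ge 3$ is left dangling and, as phrased, risks circularity with the symmetric half of the claim; it is cleaner to apply $2$-sparseness directly to the edge $xb$: if $x$ is its degree-$2$ end then $N_G(x)=\{a,b\}$ and $b$ is a cut vertex of $G$, while if $b$ is the degree-$2$ end then $x$ is the unique $X$-neighbour of both $a$ and $b$ and $x$ is a cut vertex of $G$ (using $|X|\ge 2$), a contradiction either way with no prior degree claim needed. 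With those repairs the argument goes through.
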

	
	This structural result helped them in proving that the edges of every chordless graph with maximum degree at least 3 can be colored using $\Delta$ colors. The split helps in the inductive proof. The main idea is to extend the coloring of $G_Y(a,b)$ to the coloring of $G$ by exploiting the structure of $G_X(a,b)$ which is $2$-sparse.
	
	In this paper, we also use the induction method. The proper $2$-cutset and $G_X(a,b)$ play a major role in choosing the smaller subgraph. But since the coloring has to be acyclic, we need to limit the cycles to one block where we can handle them. We try to limit the possible cycles within $G_X(a,b) \setminus w$ since it is $2$-sparse. But the major hurdle is when all the edges in the $2$-sparse side are incident on the vertices $a$ and $b$. In proper edge coloring we can give the missing colors on those vertices to the edges in $G_X(a,b) \setminus w$ by appropriately permuting the colors if needed. But that does not work with respect to acyclic edge coloring since there is a possibility of cycles being created in the graph even when we permute the colors. Hence we need a much more refined structure with respect to the proper $2$-cutset than what is required by \citet{Machado2013ECTCChordless}.
	
	Note that when all the edges are incident to either $a$ or $b$, then the graph $G_X(a,b)$ is isomorphic to a complete bipartite graph $K_{2,t}$ for $t \ge 2$. When $t=2$, the graph $G_X(a,b)$ is a $4$-cycle. We prove the following lemma.
	
	\begin{lemma}\label{lem:SpecialSplit}
		Let $G$ be a $2$-connected, not $2$-sparse chordless graph. Then there exists a split $(X,Y,a,b)$ in $G$ with the following properties.
		\begin{enumerate}
			\item $G_X(a,b)$ is $2$-sparse.
			\item $G_X(a,b)$ is not isomorphic to $K_{2,t}$ for any $t \ge 3$.
			\item In $G_X(a,b)$, we have $deg(a) \ge 3$ and either $deg(b) \ge 3$ or $deg(b)=2$ with $X$ being minimal.
		\end{enumerate}
	\end{lemma}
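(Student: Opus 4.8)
The plan is to begin from the minimum-$|X|$ split supplied by Lemma~\ref{lem:2-sparseMinimumX} and to repair it only in the single bad configuration, namely when the $2$-sparse block happens to be a large complete bipartite graph. So let $(X,Y,a,b)$ be a split of a proper $2$-cutset of $G$ with $|X|$ minimum. By Lemma~\ref{lem:2-sparseMinimumX} the block $G_X(a,b)$ is $2$-sparse (giving property~(i)) and both $a$ and $b$ have at least two neighbours in $X$; counting the marker vertex $w$ this yields $deg(a)\ge 3$ and $deg(b)\ge 3$ in $G_X(a,b)$, so the \emph{strong} form of property~(iii) holds. Moreover every split has $|X|\ge 2$, since $|X|=1$ would make $G[X\cup\{a,b\}]$ the induced path $a$–$x$–$b$, which is forbidden; hence the block cannot be $K_{2,2}$, and the only way property~(ii) can fail is $G_X(a,b)\cong K_{2,t}$ with $t\ge 3$, i.e.\ $X=\{x_1,\dots,x_{t-1}\}$ with each $x_i$ adjacent to exactly $a$ and $b$. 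If this does not occur we are done, so assume it does.

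In that situation every vertex of $X$ has degree $2$ with both edges running to $\{a,b\}$; consequently any edge whose two endpoints both have degree at least $3$ (one exists because $G$ is not $2$-sparse) lies in $G[Y\cup\{a,b\}]$ and survives, with the same endpoints, as an edge of $G_Y(a,b)$. I would now split according to whether $G_Y(a,b)$ is $2$-sparse.

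If $G_Y(a,b)$ is \emph{not} $2$-sparse, it is a strictly smaller $2$-connected, not $2$-sparse chordless graph, and I would apply the statement inductively on $|V(G)|$ to obtain a split of $G_Y(a,b)$ with properties~(i)--(iii). Since $G$ is recovered from $G_Y(a,b)$ by replacing its marker $w'$ with the fan of length-$2$ paths $a$–$x_i$–$b$, the remaining task is to transport that split back to $G$. When the chosen $2$-sparse block of $G_Y(a,b)$ avoids $w'$, it is literally a block of $G$ as well, and properties~(i)--(iii) are inherited verbatim; the other positions of $w'$ (inside the block, or equal to a cutset vertex) must be checked by hand, using that $w'$ has degree $2$ and that reattaching the fan only enlarges the $Y$-side.

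The genuinely delicate case is when $G_Y(a,b)$ is itself $2$-sparse, so that $G$ fails to be $2$-sparse only through an edge incident to $a$ or $b$. Say this edge is $ay$ with $y\in Y$; as $deg_G(y)\ge 3$ forces $deg_{G_Y(a,b)}(y)\ge 3$, $2$-sparseness of $G_Y(a,b)$ forces $deg_{G_Y(a,b)}(a)=2$, i.e.\ $y$ is the \emph{unique} neighbour of $a$ in $Y$. If $b$ and $y$ are non-adjacent, I would reselect the $2$-cutset as $(b,y)$ with new $X$-side $\{a,x_1,\dots,x_{t-1}\}$: deleting $b$ and $y$ isolates this set, because $a$ has no other neighbour in $Y$ and each $x_i$ only meets $\{a,b\}$, so $(b,y)$ is a proper $2$-cutset. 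The resulting block is again $2$-sparse (every edge still meets a degree-$2$ vertex), it contains the $5$-cycle $a,y,w'',b,x_1$ and is therefore not bipartite and not any $K_{2,s}$, and in it $deg(b)=t\ge 3$ while $deg(y)=2$; relabelling $(b,y)$ as $(a,b)$ and taking such a split with its $X$-side minimal produces exactly the second alternative of property~(iii), which is precisely why that alternative had to be allowed. The part I expect to demand the most care is verifying that the companion block is not an induced path, together with the residual configuration $b\sim y$ (so that $a$ and $b$ are jointly adjacent to $y$ and to all the $x_i$); there one must follow the extra neighbour of $y$ one step further into $Y$ and reselect the cutset along it, again reducing to a strictly smaller instance so that the induction closes.
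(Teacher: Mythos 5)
Your opening move is fine and matches the paper's starting point: the minimum-$|X|$ split of Lemma~\ref{lem:2-sparseMinimumX} already gives properties $(i)$ and the strong form of $(iii)$, so the only problem is when that block is a $K_{2,t}$, $t\ge 3$; and your repair in the subcase where $G_Y(a,b)$ is $2$-sparse --- moving the cutset to $(b,y)$ with new $X$-side $X\cup\{a\}$ --- is essentially the same construction the paper uses (its split $(X\cup\{a\},\,Y\setminus\{c\},\,b,c)$). But the proposal has two genuine gaps. First, the branch where $G_Y(a,b)$ is \emph{not} $2$-sparse is exactly where you cannot wave your hands: you propose to apply the lemma inductively to $G_Y(a,b)$ and ``transport'' the resulting split back to $G$, but the cases you defer (marker $w'$ inside the new $X$-side, or $w'$ a cutset vertex) are the hard ones. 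If $w'$ lies in the new block, reattaching the fan raises $\deg(a)$ and $\deg(b)$ there, and an edge from $a$ to a degree-$\ge 3$ neighbour would destroy $2$-sparseness; your parenthetical ``reattaching the fan only enlarges the $Y$-side'' is simply false in that case. Moreover the minimality clause in $(iii)$ does not transfer: a split of $G$ with a smaller $X$-side need not correspond to a split of $G_Y(a,b)$ at all (collapsing the fan to $w'$ can turn a non-path $Y$-side into an induced path), so ``minimal in $S'(G_Y(a,b))$'' does not imply ``minimal in $S'(G)$.'' The paper avoids this entire branch: instead of inducting, it assumes \emph{every} $2$-sparse split of $G$ is a $K_{2,t}$, builds a single auxiliary graph $H$ by deleting all but one degree-$2$ vertex of each fan, proves $H$ has no isolating pairs, and extracts from $H$ (whether $H$ is $2$-sparse or not) an edge $ac$ that lets it run your ``$(b,y)$'' construction once, for a contradiction.

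Second, in the $2$-sparse branch you defer the configuration $b\sim y$ to ``following the extra neighbour of $y$ one step further into $Y$,'' which is not a proof and, I believe, not the right move: that configuration cannot occur in a $2$-connected chordless graph. If $y$ is the unique neighbour of $a$ in $Y$ and $by\in E$, then every $b$--$y$ path other than the edge $by$ either passes through $a$ (forcing either a cut vertex or $|Y|=\{y\}$, which contradicts $\deg_G(y)\ge 3$) or stays in $Y$, in which case that path together with $y$--$a$--$x_1$--$b$ exhibits a cycle through $b$ and $y$ in $G\setminus by$, making $by$ a chord. This is the contradiction the paper records as ``either $c$ is a cut vertex or $bc$ is a chord.'' Finally, even where your constructions succeed, property $(iii)$ is not finished: you exhibit one split in $S'(G)$ with the right degrees, but the lemma's $(iii)$ is a statement about a split that is \emph{minimal} in $S'(G)$, and establishing it requires the separate argument (as in the paper) that for a minimal split at least one cutset vertex has two $X$-neighbours and that $\deg(b)=2$ forces $G_{X\setminus\{y\}}(a,y)$ to be a $K_{2,t}$.
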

	
	\begin{proof}
		Since $G$ is a $2$-connected, not $2$-sparse chordless graph, the existence of splits with the property $(i)$ follows from Lemma~\ref{lem:2-sparseOR2-cutset} and Lemma~\ref{lem:2-sparseMinimumX}. We define $S(G)$ to be the set of all splits $(X,Y,a,b)$ of any proper $2$-cutset of $G$ such that $G_X(a,b)$ is $2$-sparse. A proper $2$-cutset $(a,b)$ of $G$ is said to be an \emph{isolating pair}, if there exists a split $(X,Y,a,b)$ such that $G_X(a,b)$ is isomorphic to $K_{2,t}$ for some $t \ge 3$. The set of all isolating pairs of $G$ is represented as $I(G)$. For each $(a,b) \in I(G)$, we define the corresponding set of all degree $2$ neighbors denoted by $N_G^2(a,b)$, as follows.
		
		\begin{equation*}
			N_G^2(a,b) = \{x \in V(G) \text{ / } N(x) = \{a,b\}\}
		\end{equation*}
		
		Note that $|N_G^2(a,b)| \ge 2$ for any $(a,b) \in I(G)$. For each $(a,b) \in I(G)$, we arbitrarily select a vertex from $N_G^2(a,b)$ as a representative vertex of the pair $(a,b)$ and denote it as $r_{ab}$. An \emph{ip-deleted subgraph} of $G$ is the graph obtained from $G$ by deleting all the vertices in $N_G^2(u,v)$ other than $r_{uv}$ for each isolating pair $(u,v) \in I(G)$. We make the following observations about the subgraph $H$.
		
		\begin{observation}\label{obs:UniqueRepresentative}
			If $(u,v) \in I(G)$, then $r_{uv}$ is the unique degree $2$ vertex in the set $N_H(u) \cap N_H(v)$.
		\end{observation}
		
		\begin{observation}\label{obs:DegreeDecrease}
			The only vertices in $H$ whose degrees decrease with respect to their degree in $G$ are those which are part of an isolating pair in $G$.
		\end{observation}
		
		\begin{observation}\label{obs:DegreeTwoVertexRemoval}
			While obtaining $H$ from $G$, all the vertices that are removed from $G$ have degree $2$ in $G$.
		\end{observation}
		
		Now, to prove the property $(ii)$ of the lemma, it is enough to prove that there exists a split $(X,Y,a,b) \in S(G)$ such that $G_X(a,b)$ is not isomorphic to $K_{2,t}$ for any $t \ge 3$. If such a split exists then we get a split satisfying the property $(ii)$ of the lemma. Otherwise every split $(X,Y,a,b) \in S(G)$ is such that $G_X(a,b)$ is isomorphic to $K_{2,t}$ for some $t \ge 3$. Since $G$ is $2$-connected, we have $\delta(G) \ge 2$. Let $H$ be the ip-deleted subgraph of $G$. Since $\delta(G) \ge 2$, it is clear that $\delta(H) \ge 2$.
		
		\begin{claim}
			There exists no isolating pair in $H$, i.e., $I(H)=\phi$.
		\end{claim}
		
		\begin{proof}
			By way of contradiction, assume that there exists an isolating pair $(u,v)$ in $H$ which implies the existence of a split $(X,Y,u,v)$ in $H$ such that $H_X(u,v)$ is isomorphic to $K_{2,t}$ for some $t \ge 3$. Since $(u,v) \in I(H)$, we have that $deg_H(u) \ge 3$ and $deg_H(v) \ge 3$. Since $|N_H^2(u,v)| \ge 2$, $N_H(u) \cap N_H(v)$ contains at least two degree 2 vertices. Suppose $(u,v) \in I(G)$. Then by Observation~\ref{obs:UniqueRepresentative}, $r_{uv}$ is the unique degree 2 vertex in $N_H(u) \cap N_H(v)$, a contradiction to the fact that $|N_H^2(u,v)| \ge 2$. Therefore, we infer that $(u,v) \notin I(G)$. Hence, there exists a vertex $z \in N_H^2(u,v)$ such that $deg_G(z) \ge 3$. But $deg_H(z)=2$. Hence by Observation~\ref{obs:DegreeDecrease}, $z$ is part of an isolating pair, say $(z,w)$ in $G$. Note that $r_{zw} \in V(H)$ and $zr_{zw} \in E(H)$. But we know that $N_{H}(z)=\{u,v\}$. Hence, we can infer that $r_{zw} \in \{u,v\}$. But $deg_{H}(r_{zw})=2$, a contradiction since $deg_H(u) \ge 3$ and $deg_H(v) \ge 3$. Thus we can infer that $I(H)=\phi$ and hence the claim holds.
		\end{proof}
		
		Further, we claim that there exists a degree 2 vertex in $H$ which has a higher degree in $G$.
		
		\begin{claim}\label{clm:HigherDegreeVertex}
			There exists an edge $ac \in E(G)$ satisfying $deg_H(a)=2$, $deg_G(a)>2$ and $deg_G(c)>2$.
		\end{claim}
		
		\begin{proof}
			
			Suppose $H$ is $2$-sparse. Since $G$ is not $2$-sparse, there exists an edge $ac \in E(G)$ such that $deg_G(a)>2$ and $deg_G(c)>2$. Thus by Observation~\ref{obs:DegreeTwoVertexRemoval}, we infer that $a,c \in V(H)$. Since $H$ is $2$-sparse, one of these vertices, either $a$ or $c$ should have degree $2$ in $H$. Without loss of generality let it be $a$. Therefore, we have the desired edge $ac$ satisfying the statement of the claim.
			
			On the other hand suppose $H$ is not $2$-sparse. Then since $I(H)=\phi$, by Lemma~\ref{lem:2-sparseMinimumX}, we obtain a split $(\tilde{X},\tilde{Y},u,v)$ in $S(H)$ such that $G_{\tilde{X}}(u,v)$ is not isomorphic to $K_{2,t}$ for any $t \ge 3$. Since every split $(X,Y,a,b) \in S(G)$ is such that $G_X(a,b)$ is isomorphic to $K_{2,t}$ for some $t \ge 3$, we have $(\tilde{X},\tilde{Y},u,v) \notin S(G)$ which implies that $G_{\tilde{X}}(u,v)$ is not $2$-sparse. Hence, there exists an edge $ac \in E(G_{\tilde{X}}(u,v))$ such that $deg_{G_{\tilde{X}}(u,v)}(a)>2$ (therefore $deg_G(a)>2$) and $deg_{G_{\tilde{X}}(u,v)}(c)>2$ (therefore $deg_G(c)>2$). Therefore, by Observation~\ref{obs:DegreeTwoVertexRemoval}, we infer that $a,c \in V(H)$. Further, $H_{\tilde{X}}(u,v)$ is $2$-sparse since $(\tilde{X},\tilde{Y},u,v) \in S(H)$. Therefore, one of these vertices, either $a$ or $c$ should have degree $2$ in $H_{\tilde{X}}(u,v)$ (also in $H$). Without loss of generality let it be $a$. Hence, we have the desired edge $ac$ satisfying the statement of the claim.
			
			Thus in any case, there exists an edge $ac \in E(G)$ satisfying $deg_H(a)=2$, $deg_G(a)>2$ and $deg_G(c)>2$.
		\end{proof}
		
		By Claim~\ref{clm:HigherDegreeVertex}, there exists a degree 2 vertex $a$ in $H$ which has a higher degree in $G$. By Observation~\ref{obs:DegreeDecrease}, the vertex $a$ was part of an isolating pair, say $(a,b)$ in $G$ with a corresponding split $(X,Y,a,b)$. Since $(a,b) \in I(G)$, $X=N_G^2(a,b)$ and $Y = V(G) \setminus (\{a,b\} \cup N_G^2(a,b))$. We know that $r_{ab} \in H$. Since $deg_G(c)>2$ and $ac \in E(G)$, we infer that $c \in Y$. Since $deg_H(a)=2$ and $r_{ab} \in H$, $c$ is the unique neighbor of $a$ in $Y$. Hence $N_H(a)=\{c,r_{ab}\}$. Now consider the vertex pair $(b,c)$. If $bc \in E(G)$, then either $c$ is a cut vertex or $bc$ is a chord, a contradiction since $G$ is a $2$-connected chordless graph. Thus $bc \notin E(G)$. Now, consider a split $(X',Y',b,c)$ where $X' = X \cup \{a\}$ and $Y' = Y \setminus c$. We claim that $G_{X'}(b,c)$ is $2$-sparse. Since $G_X(a,b)$ is $2$-sparse, it is enough to prove that $deg_{G_{X'}(b,c)}(c)=2$. Since $c$ is the unique neighbor of $a$ in $Y$ and $bc \notin E(G)$, we have that $N_{G}(c) \cap X' = \{a\}$. Thus, $deg_{G_{X'}(b,c)}(c)=2$ which implies that $G_{X'}(b,c)$ is $2$-sparse. Therefore by definition, $(X',Y',b,c) \in S(G)$. Notice that $b \notin N_{G_{X'}(b,c)}(a)$, which implies that $G_{X'}(b,c)$ is not isomorphic to $K_{2,t}$ for any $t \ge 3$. But this is a contradiction to the fact that every split in $S(G)$, in particular $(X',Y',b,c)$ is such that $G_X'(b,c)$ is isomorphic to $K_{2,t}$ for some $t \ge 3$. Hence, we can conclude that there exists a split $(X,Y,a,b) \in S(G)$ such that $G_X(a,b)$ is not isomorphic to $K_{2,t}$ for any $t \ge 3$. This includes the property $(ii)$ of the Lemma into $S(G)$.
		
		Now, we define $S'(G)$ to be the set of all splits $(X,Y,a,b)$ of any proper $2$-cutset of $G$ such that $G_X(a,b)$ is $2$-sparse and is not isomorphic to $K_{2,t}$ for any $t \ge 3$. Consider a split $s=(X,Y,a,b)$ in $S'(G)$ such that $X$ is minimal. Since $G$ is a chordless graph, every $(a,b)$-path in $G_X(a,b)$ is an induced path. Hence, an $(a,b)$-path $P$ of maximum length in $G_X(a,b)$ is also an induced path. Since $(X,Y,a,b) \in S'(G)$, $P$ has at least $3$ edges, otherwise the corresponding $G_X(a,b)$ is isomorphic to $K_{2,t}$ for some $t \ge 3$, a contradiction. Hence, there exist $x,y \in X$ such that $x$ is adjacent to $a$ but not $b$ and $y$ is adjacent to $b$ but not $a$. Note that both $a$ and $b$ have at least one neighbor each in $X$ and at least one neighbor each in $Y$ as per the definition of a proper $2$-cutset.
		
		First we claim that at least one among $a$ or $b$ has at least two neighbors in $X$. By way of contradiction, assume that both $a$ and $b$ have unique neighbors in $X$. Since $G[X \cup \{a,b\}]$ is not an induced path in $G$ which is a chordless graph, the unique neighbors of $a$ and $b$ in $X$ are two distinct non adjacent vertices in $G$. This implies that $x$ and $y$ are the unique neighbors of $a$ and $b$ in $X$ respectively. Now, consider the split $(X',Y',x,b)$ where $X' = X \setminus x$ and $Y' = Y \cup \{a\}$. Since $deg_{G_X(a,b)}(b)=deg_{G_{X'}(x,b)}(b)=2$, $G_{X'}(x,b)$ is not isomorphic to $K_{2,t}$ for any $t \ge 3$. Moreover, $G_{X'}(x,b)$ is $2$-sparse, since $G_X(a,b)$ is $2$-sparse. Thus we can infer that $(X',Y',x,b) \in S'(G)$, a contradiction to the minimality of $s$ in $S'(G)$. Therefore, we have that at least one among $a$ or $b$ has at least two neighbors in $X$, implying that either $deg(a) \ge 3$ or $deg(b) \ge 3$ in $G_X(a,b)$. Without loss of generality let $deg(a) \ge 3$ in $G_X(a,b)$.
		
		If $deg(b) \ge 3$ in $G_X(a,b)$, then we are done. Otherwise let $deg(b)=2$ in $G_X(a,b)$. This implies that $y$ is the unique neighbor of $b$ in $X$. Now, consider the split $(X'',Y'',a,y)$ where $X'' = X \setminus y$ and $Y'' = Y \cup \{b\}$. If $G_{X''}(a,y)$ is not isomorphic to $K_{2,t}$ for any $t \ge 3$, then since $G_{X''}(a,y)$ is $2$-sparse (as $G_X(a,b)$ is $2$-sparse), we have $(X'',Y'',a,y) \in S'(G)$, a contradiction to the minimality of $s$ in $S'(G)$. Therefore, $G_{X''}(a,y)$ is isomorphic to $K_{2,t}$ for some $t \ge 3$, implying the minimality of $X$ as desired.
		
		This concludes the proof of the lemma.
	\end{proof}
	
	\section{Proof of Theorem~\ref{thm:ACIChordless}}\label{prf:ACIChordlessThm}
	\begin{proof}
		Let $G$ be the given connected chordless graph with $n$ vertices and $m$ edges. If $\Delta=1$, then $G$ has only one edge which requires only one color for any coloring, indicating that $a'(G)=1=\Delta$. If $\Delta=2$ and $G$ is acyclic, then $G$ is a path. We know that any path can be acyclically edge colored using 2 colors. If $\Delta=2$ and $G$ is not acyclic, then $G$ is a cycle. Note that for the acyclic coloring of the edges of a cycle, we need at least 3 colors by the definition of acyclic edge coloring. It is easy to see that 3 colors are also sufficient. Therefore, $a'(G)=3=\Delta+1$. Hence we assume that $\Delta \ge 3$. Further, we use induction on $m$ in the proof, when necessary.
		
		If G is not $2$-connected, then there exists a vertex $x \in V$ which is a cut vertex. Let $C_1, C_2,...,C_k$ be the components in $G \setminus x$. For each $i \in \{1,2,....,k\}$ we define $C'_i$ as $G[V(C_i) \cup x]$. Since each $C'_i$ is chordless and has less than $m$ edges, by Induction Hypothesis (I.H.), we acyclically color all $C'_i$'s with $\Delta$ colors since $\Delta(C'_i) \le \Delta, \forall i$. Let this coloring be $c'$. Now, we extend $c'$ to a coloring $c$ of $G$. Observe that the coloring of each $C'_i$ is independent of each other except for the edges incident on $x$. Now permute the colors in each $C'_i$ so that the edges incident on $x$ receive different colors, to get the coloring $c$ of $G$. It is easy to see that the coloring $c$ is proper and acyclic and hence we are done. Thus, we also assume that $G$ is $2$-connected. This also implies that $\delta(G) \ge 2$.
		
		If $G$ has an edge $uv$ whose end vertices have degree $2$, then we obtain a graph $H$ from $G$ by contracting the edge $uv$ to a new vertex $k_{uv}$. Let $u'$ be the neighbor of $u$ other than $v$ and let $v'$ be the neighbor of $v$ other than $u$ in $G$. Note that $H$ is chordless and has less than $m$ edges. Hence by I.H., $H$ can be colored using $\Delta$ colors, since $\Delta(H) \le \Delta$. Let $d$ be one such coloring. Now we extend the coloring $d$ to a coloring $c$ of $G$. Assign $c(uu')=d(k_{uv}u')$, $c(vv')=d(k_{uv}v')$ and assign a color other than these two colors, to the edge $uv$ (we have at least three colors since $\Delta \ge 3$). For any other edge $e$ in $G$, $c(e)=d(e)$. It is easy to see that the coloring $c$ is acyclic and we are done. Hence, we also assume that $G$ does not have an edge whose end vertices have degree $2$.
		
		The following observations and the subsequent lemma are used multiple times further down the proof.
		
		\begin{observation}\label{obs:2sparseBipartite}
			Let $G$ be a $2$-connected, $2$-sparse graph such that no edge is incident on two degree $2$ vertices. Then $G$ is bipartite.
		\end{observation}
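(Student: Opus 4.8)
The plan is to produce an explicit bipartition by sorting the vertices according to their degree, so that the two degree classes themselves become the two sides. First I would record that $2$-connectivity forces $\delta(G) \ge 2$, so that the phrase ``degree at most $2$'' occurring in the definition of $2$-sparse means ``degree exactly $2$'' in our setting. I then partition $V(G)$ into $A = \{v : \deg(v) = 2\}$ and $B = \{v : \deg(v) \ge 3\}$. Since $\delta(G) \ge 2$, every vertex lies in exactly one of these two sets, so $(A,B)$ is a genuine partition of $V(G)$ with no vertex left unclassified.

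Next I would argue that every edge of $G$ joins $A$ to $B$. Consider an arbitrary edge $uv$. Because $G$ is $2$-sparse, at least one of $u,v$ has degree at most $2$, hence (by the observation above) degree exactly $2$, so at least one endpoint lies in $A$. Because no edge of $G$ is incident on two degree-$2$ vertices, it is \emph{not} the case that both endpoints lie in $A$. Combining these two facts, exactly one endpoint of $uv$ lies in $A$ and the other lies in $B$. As $uv$ was arbitrary, no edge is contained in $A$ and none is contained in $B$; this is precisely the assertion that $(A,B)$ is a bipartition of $G$, so $G$ is bipartite.

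I expect essentially no obstacle here, since the hypotheses are tailored so that the degree classes are exactly the two parts. The only points that need care are confirming that the two degree classes together cover all of $V(G)$ (this is where $2$-connectivity is used, via $\delta(G) \ge 2$, to rule out isolated and degree-$1$ vertices that would otherwise escape the classification), and checking that the two incidence conditions are complementary rather than contradictory, i.e. that ``at least one endpoint of degree $2$'' and ``not both endpoints of degree $2$'' combine to give ``exactly one endpoint of degree $2$''. Once these are in place the bipartiteness is immediate, and no argument about cycle parity or about the structure of paths is needed.
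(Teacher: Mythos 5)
Your proposal is correct and is essentially the paper's own argument: the paper likewise notes that $\delta(G)=2$, that $2$-sparseness makes the vertices of degree at least $3$ an independent set, and that the hypothesis makes the degree-$2$ vertices an independent set, yielding the same bipartition into degree classes. No further comment is needed.
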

		
		\begin{proof}
			Since $G$ is $2$-sparse and $2$-connected, we have $\delta(G)=2$. Also since $G$ is $2$-sparse, the set of vertices $\{x \in V(G) \text{ / } deg(x) \ge 3\}$ form an independent set. Now, since there is no edge between any two vertices of degree 2, the set of degree 2 vertices also form an independent set. Therefore, $G$ is bipartite.
		\end{proof}
		
		\begin{observation}\label{obs:PathLengthEven}
			Let $G$ be a $2$-connected, not $2$-sparse chordless graph such that no edge is incident on two degree $2$ vertices and let $(X,Y,a,b)$ be a split of $G$ such that $G_X(a,b)$ is $2$-sparse, $deg(a) \ge 3$ and $deg(b) \ge 3$. Then any $(a,b)$-path in $G_X(a,b)$ is of even length.
		\end{observation}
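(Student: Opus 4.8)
The plan is to reduce the claim to the bipartiteness of $G_X(a,b)$ and then read off the parity of $(a,b)$-paths from the location of $a$ and $b$ in the bipartition. Concretely, I would first verify that $G_X(a,b)$ meets all the hypotheses of Observation~\ref{obs:2sparseBipartite}, then invoke that observation to conclude $G_X(a,b)$ is bipartite, and finally exploit the marker vertex $w$ to pin down that $a$ and $b$ lie in the same part.

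First I would check the hypotheses of Observation~\ref{obs:2sparseBipartite} for $G_X(a,b)$. It is $2$-sparse by assumption. It is $2$-connected: this is a standard property of the block obtained from a split of a proper $2$-cutset, and it can be verified directly from the $2$-connectivity of $G$ by checking that none of $w$, $a$, $b$, or a vertex of $X$ is a cut vertex. Indeed, every vertex of $X$ has all of its neighbors inside $X \cup \{a,b\}$ (there are no edges between $X$ and $Y$), so any route from $X$ to the rest of $G$ must pass through $a$ or $b$, while $w$ keeps $a$ and $b$ linked after deleting any single vertex. The key verification is that no edge of $G_X(a,b)$ is incident on two degree-$2$ vertices. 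Since $X$ is separated from $Y$ by $\{a,b\}$, each vertex of $X$ has exactly the same neighborhood, and hence the same degree, in $G_X(a,b)$ as in $G$; thus an edge inside $X$ joining two degree-$2$ vertices of $G_X(a,b)$ would already be such an edge in $G$, which is excluded by hypothesis. The only genuinely new degree-$2$ vertex is the marker $w$, whose two neighbors are $a$ and $b$, both of degree at least $3$ in $G_X(a,b)$; so the edges $wa$ and $wb$ are safe as well, and $\delta(G_X(a,b)) \ge 2$.

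Having checked these, Observation~\ref{obs:2sparseBipartite} yields that $G_X(a,b)$ is bipartite; fix a bipartition into parts $A$ and $B$. The marker vertex now supplies the path $a$--$w$--$b$ of length $2$, an even number, which forces $a$ and $b$ into the same part of the bipartition. Since in a bipartite graph every path between two vertices of the same part has even length, every $(a,b)$-path in $G_X(a,b)$ is of even length, as claimed.

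The main obstacle I anticipate is not the parity argument itself but the bookkeeping needed to confirm that $G_X(a,b)$ inherits both the ``no edge on two degree-$2$ vertices'' property and $2$-connectivity from $G$; the crucial point is that the degrees and adjacencies of vertices in $X$ are untouched by the splitting, so that the only new degree-$2$ vertex is $w$, whose neighbors are high-degree. Once Observation~\ref{obs:2sparseBipartite} applies, the marker vertex makes the parity conclusion immediate.
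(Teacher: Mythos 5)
Your proof is correct and follows essentially the same route as the paper: verify the hypotheses of Observation~\ref{obs:2sparseBipartite}, conclude that $G_X(a,b)$ is bipartite, and read off the parity from $a$ and $b$ lying in the same part. The only cosmetic difference is how same-sidedness is established --- you use the even path $a$--$w$--$b$ through the marker vertex, while the paper implicitly uses the fact that the bipartition in Observation~\ref{obs:2sparseBipartite} is by degree class and both $a$ and $b$ have degree at least $3$; both are valid, and your explicit check of the inherited hypotheses (in particular $2$-connectivity) is more careful than the paper's.
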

		
		\begin{proof}
			Since $G$ is $2$-connected and not $2$-sparse, by Lemma~\ref{lem:2-sparseMinimumX} we have a split $(X,Y,a,b)$ of $G$ such that $G_X(a,b)$ is $2$-sparse. Since there is no edge incident on two degree 2 vertices in $G$ and $deg(a) \ge 3$ and $deg(b) \ge 3$, there is no edge incident on two degree 2 vertices in $G_X(a,b)$ as well. Hence by Observation~\ref{obs:2sparseBipartite}, $G_X(a,b)$ is bipartite with $a$ and $b$ on the same side of the bipartition. Hence, any $(a,b)$-path should be of even length.
		\end{proof}
		
		\begin{lemma}\label{lem:BichromaticPathColorExchange}
			Let $d$ be a partial acyclic edge coloring of a graph $G$ using at most $\Delta$ colors. Let $P = v_1 v_2 \cdots v_k$ be a maximal bichromatic path with $d(v_{2i-1}v_{2i})=\alpha$ and $d(v_{2i}v_{2i+1})=\beta$ for $1 \le i \le \lfloor \frac{k-1}{2} \rfloor$. For all $v_{2i-1} \in V(P)$, let $deg(v_{2i-1})=2$ and for all $v_{2i} \in V(P)$, let the neighbors of $v_{2i}$ are all degree $2$ vertices. Let $N(v_1)=\{v_2,s\}$, $deg(s) \ge 3$ and $s \notin V(P)$. Further, let the edge $v_1s$ be not colored with respect to the coloring $d$. Then there exists a valid partial acyclic edge coloring $c$ of $G$ using at most $\Delta$ colors such that all the edges colored in $d$ and the edge $v_1s$ are colored in $c$.
		\end{lemma}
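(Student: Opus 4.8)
The plan is to extend $d$ by colouring the uncolored edge $v_1s$, exchanging the two colours along $P$ when this is necessary, and certifying acyclicity throughout via Lemma~\ref{lem:ColorValidity}. The running observation is that $v_1$ has degree $2$ and its only coloured incident edge is $v_1v_2$ (coloured $\alpha$); hence, by the fact that at most one maximal bichromatic path passes through a given vertex, $P$ is the \emph{unique} maximal $(\alpha,\beta)$-path through $v_1$, and since $s\notin V(P)$ it must terminate at $v_k\neq s$. I would first attempt the colour $\beta$. If $\beta\notin F_s$, then $\beta$ is a candidate for $v_1s$, and by Lemma~\ref{lem:ColorValidity} the only possible obstruction is an $(\alpha,\beta,v_1s)$-critical path; any such path would have to coincide with $P$, which does not end at $s$. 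Thus $\beta$ is valid and we are done.

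The substantial case is $\beta\in F_s$, where $\beta$ is not even a candidate. The central tool is the operation of exchanging $\alpha$ and $\beta$ on \emph{every} edge of $P$ at once, and the main obstacle is verifying that this exchange keeps the coloring proper and acyclic. Properness is routine: each internal odd vertex $v_{2i+1}$ has degree $2$ and carries exactly one $\alpha$- and one $\beta$-edge (both on $P$), each internal even vertex $v_{2i}$ carries a single $\alpha$- and a single $\beta$-edge on $P$ and, by properness of $d$, no other edge in those colours, while $v_1,v_k$ have a single incident path edge. For acyclicity, the exchange merely reverses the alternation on one path-component of the $(\alpha,\beta)$-subgraph, so no $(\alpha,\beta)$-cycle appears; and for any $\gamma\notin\{\alpha,\beta\}$, no $(\alpha,\gamma)$- or $(\beta,\gamma)$-cycle can traverse an odd vertex $v_{2i+1}$, since such a vertex has degree $2$ with its two colours being exactly $\alpha$ and $\beta$. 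As every edge of $P$ whose colour changed is incident to some odd vertex, no new bichromatic cycle can use it; this is precisely where the hypotheses that odd vertices and the neighbours of even vertices have degree $2$ are essential.

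With the exchange in hand, I would split on whether $\alpha\in F_s$. If $\alpha\notin F_s$, perform the exchange (so $v_1v_2$ is now coloured $\beta$) and colour $v_1s$ with $\alpha$: the only obstruction would be a $(\beta,\alpha,v_1s)$-critical path, which is again the exchanged copy of $P$ and ends at $v_k\neq s$, so $\alpha$ is valid. If instead $\alpha\in F_s$ too, then $\alpha,\beta\in F_s$, and $|F_s|\le deg(s)-1\le\Delta-1$ leaves a candidate colour $\mu\notin F_s$ (so $\mu\neq\alpha,\beta$). If no $(\alpha,\mu,v_1s)$-critical path exists, colour $v_1s$ with $\mu$ without any exchange. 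Otherwise such a critical path leaves $v_2$ along a $\mu$-edge $v_2t_1$; since $t_1$ is a neighbour of the even vertex $v_2$ it has degree $2$, and because the critical path continues from $t_1$ with an $\alpha$-edge, the two colours at $t_1$ are exactly $\mu$ and $\alpha$. I would then exchange on $P$ and colour $v_1s$ with $\mu$: the only possible obstruction is now a $(\beta,\mu,v_1s)$-critical path, but the maximal $(\beta,\mu)$-path from $v_1$ starts at $v_1v_2$ (now $\beta$), reaches $t_1$ through the $\mu$-edge, and halts there because $t_1$ has no $\beta$-edge. As $t_1$ has degree $2$ while $deg(s)\ge 3$, this path cannot reach $s$, so $\mu$ is valid.

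In each case we obtain a proper acyclic coloring $c$ with at most $\Delta$ colours that colours all edges of $d$ (up to the recolouring of $P$) together with $v_1s$, as required. The two delicate points to get exactly right are the acyclicity of the colour exchange along $P$, which rests entirely on the degree-$2$ structure of the odd vertices and of the neighbours of the even vertices, and, in the sub-case $\alpha,\beta\in F_s$, the observation that after the exchange the critical path for $\mu$ is forced to die at the degree-$2$ vertex $t_1$ before it can reach $s$.
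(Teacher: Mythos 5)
Your overall strategy (exchange $\alpha$ and $\beta$ along $P$, then choose a color for $v_1s$ and rule out critical paths via Lemma~\ref{lem:ColorValidity}) matches the paper's, but there is a genuine gap at the crucial step: your claim that the exchange along $P$ cannot create a new bichromatic cycle is false. You argue that every recolored edge is incident to an odd-indexed vertex whose two colors are exactly $\alpha$ and $\beta$; this holds for the \emph{internal} odd vertices but fails at the endpoint $v_k$ when $k$ is odd. In that case $v_k$ is an odd-indexed degree-$2$ vertex whose second edge $v_kv$ carries some color $\gamma\notin\{\alpha,\beta\}$ (it is not $\alpha$ only because $P$ is maximal), so $v_k$ sees $\{\beta,\gamma\}$ before the exchange and $\{\alpha,\gamma\}$ after it. If $G$ contains an $(\alpha,\gamma)$-path from $v$ back to a $\gamma$-neighbor $r$ of $v_{k-1}$, recoloring $v_{k-1}v_k$ from $\beta$ to $\alpha$ closes an $(\alpha,\gamma)$-bichromatic cycle through $v_{k-1},v_k,v,\dots,r$. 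A concrete instance: a $4$-cycle $v_{k-1}v_kvr$ with $d(v_{k-1}v_k)=\beta$, $d(v_kv)=\gamma$, $d(vr)=\alpha$, $d(rv_{k-1})=\gamma$ is consistent with all hypotheses of the lemma and is acyclic before the exchange (color pattern $\beta\gamma\alpha\gamma$) but bichromatic after it ($\alpha\gamma\alpha\gamma$).

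This is exactly the cycle $C$ of Figure~\ref{fig:ColorExchangeLemma}, and handling it is the bulk of the paper's proof: one must additionally swap the colors of $v_{k-1}r$ and $v_{k-1}v_{k-2}$ to destroy $C$, verify that this second swap creates no further bichromatic cycle, and then carry out the choice of a color for $v_1s$ in the repaired coloring $c''$, where (when $k=3$) the second swap changes $F_{sv_1}$ from $\{\beta\}$ to $\{\gamma\}$ and forces a separate analysis. Your subsequent case distinctions ($\beta\notin F_s$, $\alpha\notin F_s$, candidate $\mu$ with the critical path dying at the degree-$2$ vertex $t_1$) are in the right spirit and reappear in the paper's argument, but as written they rest on the unrepaired coloring $c'$, so the proof does not go through without the missing repair step.
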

		
		\begin{proof}
			If there is a valid color for the edge $sv_1$, then we assign the same to $sv_1$ to get the required coloring $c$. From now on, we assume that there is no valid color for the edge $v_1s$. Hence, either there is no candidate color for the edge $sv_1$ which implies that $|F_s \cup F_{v_1}| = \Delta$ and $\alpha \notin F_{v_1s}$ or no candidate color, say $\eta$ is valid for the edge $sv_1$ which implies that $\alpha \in F_{v_1s}$ and there exists an $(\alpha,\eta,v_1s)$-critical path in $G$ with respect to the coloring $d$. Note that in the latter case $\eta \ne \beta$ since the $(\alpha,\beta)$-bichromatic path $P$ does not reach the vertex $s$.
			
			We obtain a coloring $c'$ from $d$, by exchanging the colors $\alpha$ and $\beta$ along the path $P$ so that $F_{sv_1}(c')=\{\beta\}$ and $F_{v_1s}(c')=F_{v_1s}(d)$. If there is no bichromatic cycle created by this exchange, then we let coloring $c''=c'$.
			
			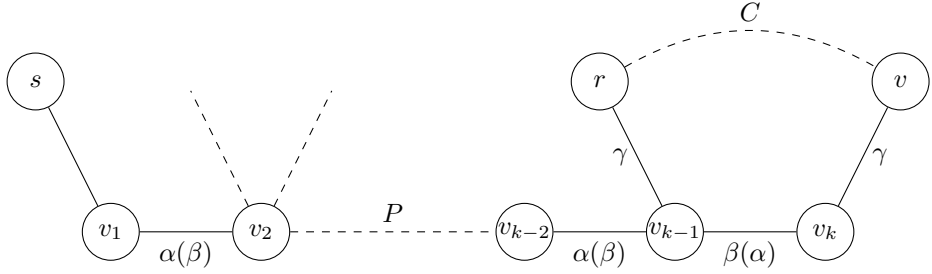
\begin{figure}[h]
				\centering
				\begin{tikzpicture}
					\begin{scope}[every node/.style={circle,draw,inner sep=0pt, minimum size=5ex}]
						\node (s) at (0,2) {$s$};
						\node (v1) at (1,0) {$v_1$};
						\node (v2) at (3,0) {$v_2$};
						\node (vk-2) at (6.5,0) {$v_{k-2}$};
						\node (vk-1) at (8.5,0) {$v_{k-1}$};
						\node (vk) at (10.5,0) {$v_k$};
						\node (r) at (7.5,2) {$r$};
						\node (v) at (11.5,2) {$v$};
					\end{scope}
					\node (x) at (2,2) {};
					\node (y) at (4,2) {};
					\draw (s) to (v1);
					\draw (v1) to node[below]{$\alpha (\beta)$} (v2);
					\draw (vk-2) to node[below]{$\alpha (\beta)$} (vk-1);
					\draw (vk-1) to node[below]{$\beta (\alpha)$} (vk);
					\draw (vk-1) to node[left]{$\gamma$} (r);
					\draw (vk) to node[right]{$\gamma$} (v);
					\draw [dashed] (v2) to node[above]{$P$} (vk-2);
					\draw [dashed, bend left] (r) to node[above]{$C$} (v);
					\draw [dashed] (v2) to (x);
					\draw [dashed] (v2) to (y);
				\end{tikzpicture}
				\caption{Path $P$ and it's neighborhood}
				\label{fig:ColorExchangeLemma}
			\end{figure}
			
			Otherwise, let $C$ be a bichromatic cycle formed because of the exchange (an instance is depicted in Figure~\ref{fig:ColorExchangeLemma}). Since $P$ is maximal, it is clear that $C$ is not an $(\alpha,\beta)$-bichromatic cycle. Note that one of the colors in $C$ is either $\alpha$ or $\beta$ because we did not change any color other than $\alpha$ and $\beta$. This implies that $V(P) \cap V(C) \subseteq \{v_1, v_2, v_{k-1}, v_k\}$ since the alternate degree 2 vertices in $P$ see only the colors $\alpha$ and $\beta$. Therefore, the cycle $C$ may contain only the first or the last edge of the path $P$. The cycle $C$ can not contain the first edge of $P$ since the edge $v_1s$ is not assigned any color in the coloring $d$ as well as in $c'$. Therefore, we can infer that the cycle contains only the last edge (which is $v_{k-1}v_k$) of $P$ with $V(P) \cap V(C) = \{v_{k-1}, v_k\}$. This also implies that $deg(v_{k-1}) \ge 3$ and $deg(v_k)=2$. We can also infer that $C$ is the unique bichromatic cycle formed by the color exchange in $P$ because any cycle formed has to contain the vertex $v_k$ and $deg(v_k)=2$.
			
			Let $r$ be the neighbor of $v_{k-1}$ in the cycle $C$ but not in the path $P$. Clearly $deg(r)=2$ and $deg(v_{k-2})=2$. It is easy to see that $d(v_{k-1}v_k)=\beta$. Hence, $c'(v_{k-1}v_k)=\alpha$. Note that $F_{v_{k-1}v_{k-2}}=\{\alpha\}$. Let $c'(v_{k-1}r)=\gamma$. Therefore, $C$ is an $(\alpha,\gamma)$-bichromatic cycle. Now swap the colors with respect to the edges $v_{k-1}r$ and $v_{k-1}v_{k-2}$ to get a coloring $c''$ which clearly kills the bichromatic cycle $C$. Any new bichromatic cycle $C'$ formed should involve at least one of the edges $\{v_{k-1}r,v_{k-1}v_{k-2}\}$ and hence at least one of the vertices $\{r,v_{k-2}\}$. Note that since $F_r=\{\alpha,\beta\}$ and $F_{v_{k-2}}=\{\alpha,\gamma\}$, we can infer that the cycle $C'$ contains $\alpha$ as one of the colors. Since $c''(v_{k-1}v_k)=\alpha$, the cycle $C'$ contains the vertex $v_k$. But $F_{v_k}=\{\alpha,\gamma\}$, implying that $C'$ is an $(\alpha,\gamma)$-bichromatic cycle. This is a contradiction since the $(\alpha,\gamma)$-bichromatic path from $v_{k-1}$ going towards the vertex $v_k$ ends at $r$. Hence, the coloring $c''$ is a valid partial coloring of $G$.
			
			Now, we need to assign a color to the edge $v_1s$. Note that either $c''(v_1v_2)=\beta$ or $c''(v_1v_2)=\gamma$ with $k=3$. If $\alpha \notin F_{v_1s}$, then we assign the color $\alpha$ to the edge $v_1s$ to get the coloring $c$. Notice that $F_{sv_1}$ is either $\{\beta\}$ or $\{\gamma\}$. If $F_{sv_1}=\{\beta\}$, the only possible bichromatic cycle with respect to the coloring $c$ is an $(\alpha,\beta)$-cycle. But we know that the path $P$ ends at $v_k$. In the coloring $c''$, the $(\beta,\alpha)$-bichromatic path from $v_1$ would end at $v_k$ or $v_{k-2}$, both of which are not the same as $s$. Thus we conclude that there does not exist a $(\beta,\alpha,v_1s)$-critical path with respect to $c''$. If $F_{sv_1}=\{\gamma\}$, the only possible bichromatic cycle with respect to the coloring $c$ is an $(\alpha,\gamma)$-cycle. But we know that in the coloring $c''$, the $(\alpha,\gamma)$-bichromatic path from $v_1$ would go through $v_k$ and end at $r$. Further, since $deg(r)=2$ and $deg(s) \ge 3$, $r \ne s$. Thus we conclude that there does not exist a $(\gamma,\alpha,v_1s)$-critical path with respect to $c''$. Hence, the coloring $c$ is the required coloring.
			
			On the other hand suppose $\alpha \in F_{v_1s}$. Then we have $|F_{v_1s}(d) \cup F_{sv_1}(d)| \le \Delta-1$ and there exists a candidate color $\eta$ for the edge $v_1s$ with respect to the coloring $d$ as well as $c''$. Recall that $\eta \ne \beta$.
			
			Assume that $F_{sv_1}=\{\beta\}$. Suppose the color $\eta$ is not valid for the edge $v_1s$. Then there exists a $(\beta,\eta,v_1s)$-critical path with respect to $c''$. Let this path be $Q = v_1 v_2 u_1 u_2 \cdots s$. Note that $deg(u_1)=2$ and hence $c''(u_1u_2)=\beta$. Recall that there existed an $(\alpha,\eta,v_1s)$-critical path with respect to $d$ which implies $d(u_1u_2)=\alpha$. The only edges that are recolored from $\alpha$ to $\beta$ while obtaining $c''$ from $d$ are the edges in $P$. If $k \le 3$, then it is easy to see that $P$ does not reach the vertex $u_1$. If $k>3$, then observe that any edge colored $\alpha$ in $P$ goes from a degree 2 vertex to a higher degree vertex with respect to the coloring $d$. But $deg(u_1)=2$ implying that $v_k \ne u_1$. Hence, irrespective of the value of $k$, $P$ does not reach the vertex $u_1$. Therefore, $c''(u_1u_2)=\alpha$, a contradiction to the fact that $c''(u_1u_2)=\beta$. Hence, the color $\eta$ is valid for the edge $v_1s$.
			
			Now, we have $F_{sv_1} \ne \{\beta\}$. Then, $F_{sv_1}=\{\gamma\}$ with $k=3$. Note that $c''(v_2r)=\beta$, $F_{v_2r}=\{\alpha\}$ and $c''(v_2v_3)=\alpha$. Recall that there existed an $(\alpha,\gamma)$-bichromatic cycle $C'$ involving the vertices $\{r,v_2,v_3\}$ with respect to the coloring $c'$ which in turn implied that there was an $(\alpha,\gamma)$-bichromatic path starting from $v_1$ going through $r$ ending at $v_3$ with respect to the coloring $d$. Suppose $\eta=\gamma$. Then, since there is no valid color for the edge $v_1s$ with respect to $d$, there exists an $(\alpha,\gamma,v_1s)$-critical path. Therefore, in the coloring $d$, the $(\alpha,\gamma)$-bichromatic path starting from $v_1$ going through $r$ ends at $s$, a contradiction since $v_3 \ne s$. Thus we can infer that $\eta \ne \gamma$. Suppose the color $\eta$ is not valid for the edge $v_1s$. Then there exists a $(\gamma,\eta,v_1s)$-critical path with respect to $c''$. Let this path be $R = v_1 v_2 w_1 w_2 \cdots s$. Note that $deg(w_1)=2$ and hence $c''(w_1w_2)=\gamma$. Recall that there existed an $(\alpha,\eta,v_1s)$-critical path with respect to $d$ which implies $d(w_1w_2)=\alpha$. Note that the only edge that was colored $\alpha$ in $d$ which was recolored to $\gamma$ in $c''$ is the edge $v_1v_2$. Therefore, $c''(w_1w_2)=\alpha$, a contradiction to the fact that $c''(w_1w_2)=\gamma$. Hence, the color $\eta$ is valid for the edge $v_1s$ in $c''$.
			
			Therefore, irrespective of whether $F_{sv_1}=\{\beta\}$ or not, the color $\eta$ is valid for the edge $v_1s$ in $c''$. Hence, we assign $\eta$ to the edge $v_1s$ to get the required valid coloring $c$.
		\end{proof}
		
		Recall that we have a $2$-connected graph $G$ with $\Delta \ge 3$, $\delta \ge 2$ and no edge incident on both degree $2$ vertices. Depending upon whether $G$ is $2$-sparse or not, we have the following cases.
		
		\textbf{Case-i:} $G$ is $2$-sparse.
		
		Consider any edge $xy$ of $G$. Since $G$ is $2$-sparse, either $deg(x)=2$ or $deg(y)=2$. Without loss of generality assume that $deg(x)=2$. Then $3 \le deg(y) \le \Delta$. Since $deg(x)=2$, $|N(x) \setminus y| = 1$. Let $x'$ be the neighbor of $x$ other than $y$. Let $G' = G \setminus xy$. Since $G'$ is chordless and has less than $m$ edges, by I.H., we can acyclically color $G'$ with $\Delta$ colors because $\Delta(G') \le \Delta$. Let $c'$ be an acyclic partial edge coloring of $G$ corresponding to the subgraph $G'$. Let $c'(xx')=\alpha$. Now, we extend $c'$ to a coloring $c$ of $G$.
		
		If $F_{yx} \cap F_{xy} \ne \phi$, then $\alpha \in F_{yx} \cap F_{xy}$. Thus $|F_{yx} \cup F_{xy}| \le \Delta-1$. Hence, there exists a candidate color $\gamma$ for the edge $xy$. If there is no $(\alpha,\gamma,xy)$-critical path, then $\gamma$ is also valid for the edge $xy$ by Lemma~\ref{lem:ColorValidity}. Otherwise there exists an $(\alpha,\gamma,xy)$-critical path, say $P$. Let $y'$ be the neighbor of $y$ along $P$. Then $c'(yy')=\alpha$ and $F_{yy'}=\{\gamma\}$ since $deg(y')=2$. Let $\beta$ be a color other than $\alpha$ and $\gamma$ ($\beta$ exists because $\Delta \ge 3$). Let $Q$ be the $(\alpha,\beta)$-maximal bichromatic path starting from the vertex $x$. Since $F_{yy'}=\{\gamma\}$, $Q$ does not reach $y$ through an edge colored $\alpha$.
		
		If $F_{yx} \cap F_{xy} = \phi$, then there is no $(\alpha,\beta,xy)$-critical path in $G$ for any $\beta$. Hence, if there exists at least one candidate color $\gamma$ for the edge $xy$, then we are done since $\gamma$ is also valid by Lemma~\ref{lem:ColorValidity}. Otherwise no color is a candidate color for the edge $xy$. Thus $|F_{yx} \cup F_{xy}| = \Delta$. Let $\beta$ be a color other than $\alpha$. Let $Q$ be the $(\alpha,\beta)$-maximal bichromatic path starting from the vertex $x$. Since there is no $(\alpha,\beta,xy)$-critical path in $G$, $Q$ does not reach $y$ through an edge colored $\alpha$.
		
		Hence, irrespective of whether $F_{yx} \cap F_{xy} = \phi$ or not, we have an $(\alpha,\beta)$-maximal bichromatic path $Q$ starting from $x$ which does not reach $y$ through an edge colored $\alpha$. If $Q$ reaches $y$ through an edge colored $\beta$, we have an odd cycle in a $2$-sparse graph $G$, a contradiction to Observation \ref{obs:2sparseBipartite}. Thus we can infer that $Q$ does not reach $y$. Since $G$ is $2$-sparse, the alternate vertices in $Q$ are of degree 2 with degree of $x$ being equal to 2 and all the neighbors of any higher degree vertex in $Q$ are of degree 2. Further, we have $N(x)=\{x',y\}$, $deg(y) \ge 3$ and $y \notin V(Q)$. Also since the edge $xy$ is not colored in $c'$, the path $Q$ and the coloring $c'$ satisfy the conditions required by Lemma~\ref{lem:BichromaticPathColorExchange}. Hence, we obtain an acyclic partial edge coloring $c$ of $G$ with a valid color for the edge $xy$ as per Lemma~\ref{lem:BichromaticPathColorExchange}. Since all the edges of $G$ have been colored, $c$ is also an acyclic edge coloring of $G$.
		
		\textbf{Case-ii:} $G$ is not $2$-sparse.
		
		By Lemma~\ref{lem:2-sparseOR2-cutset}, $G$ admits a proper $2$-cutset with a corresponding split. Let $S$ be the set of all splits $(X,Y,a,b)$ of $G$ such that $G_X(a,b)$ is $2$-sparse, is not isomorphic to $K_{2,t}$ for any $t \ge 3$ and in $G_X(a,b)$, $deg(a) \ge 3$ and either $deg(b) \ge 3$ or $deg(b)=2$ with $X$ being minimal. By Lemma~\ref{lem:SpecialSplit}, we know that $S \ne \phi$. If there exists a split $(X,Y,a,b)$ in $S$ such that $deg(a) \ge 3$ and $deg(b) \ge 3$, then consider the split $(X,Y,a,b)$. Otherwise consider a split $(X,Y,a,b)$ in $S$ such that $deg(a) \ge 3$ and $deg(b)=2$ with $X$ being minimal in $S$. Let $x$ be a vertex in $G_X(a,b)$ which is adjacent to $a$ but not $b$ and let $y$ be a vertex in $G_X(a,b)$ which is adjacent to $b$ but not $a$. Since $deg(a) \ge 3$ in $G$ and in $G_X(a,b)$, any neighbor of $a$ in $X$ should have degree $2$ because $G_X(a,b)$ is $2$-sparse. This implies that $deg(x)=2$. By a similar argument, if $b$ has at least two neighbors in $X$ (i.e., if $deg(b) \ge 3$), then any neighbor of $b$ in $X$ (in particular, the vertex $y$) is of degree $2$.
		
		Now consider $G' = G \setminus xa$. Since $G'$ is chordless and has less than $m$ edges, by I.H., we can acyclically color the edges of $G'$ with $\Delta$ colors since $\Delta(G') \le \Delta$. Let $c'$ be an acyclic partial edge coloring of $G$ corresponding to the subgraph $G'$. Since $deg(x)=2$ and $x$ is adjacent to $a$ but not $b$, $x$ should have a neighbor $u$ other than $a$. Let $c'(xu)=\alpha$. Now, we extend $c'$ to a coloring $c$ of $G$. The following claim is useful further down the proof.
		
		\begin{claim}\label{clm:TwoPaths}
			Let $R$ be the $(\alpha,\beta)$-maximal bichromatic path starting from the vertex $x$ and let $T$ be the $(\alpha,\gamma)$-maximal bichromatic path starting from the vertex $x$. Then either $R$ or $T$ does not reach the vertex $b$.
		\end{claim}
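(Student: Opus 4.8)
The plan is to argue by contradiction, assuming that both $R$ and $T$ reach $b$, and to exploit two features of the current setup: that $\beta$ and $\gamma$ are candidate colors for the uncolored edge $xa$ (so that $\beta,\gamma \notin F_a \cup \{\alpha\}$, which is how such colors arise later in the proof), and that $G_X(a,b)$ is bipartite. First I would observe that, since $x$ has degree $2$ with its only colored edge being $xu$ (colored $\alpha$) and $xa$ uncolored, every maximal bichromatic path from $x$ that uses color $\alpha$ must begin with the edge $xu$; hence both $R$ and $T$ start $x,u,\dots$. For each of them to travel beyond $u$ and reach $b \neq u$, the vertex $u$ must carry both a $\beta$-edge and a $\gamma$-edge, so $deg(u)\ge 3$ and $u$ lies on the high-degree side of the bipartition of $G_X(a,b)$ guaranteed by Observation~\ref{obs:2sparseBipartite}.

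The key step is to confine $R$ and $T$ to the block $G_X(a,b)$. Because $\beta \notin F_a$, the vertex $a$ has no $\beta$-edge, so an $(\alpha,\beta)$-path can only arrive at $a$ along an $\alpha$-edge and then cannot continue; thus $a$ can occur on $R$ only as an endpoint, which is impossible once $R$ reaches $b$. Hence $a \notin V(R)$, and since the only exits from $X$ are through $a$ and $b$, the initial segment $R_1$ of $R$ up to its first visit of $b$ stays inside $G[X \cup \{b\}] \subseteq G_X(a,b)$; the same holds for the corresponding segment $T_1$ of $T$. Prepending the edge $ax$ turns $R_1$ into an $(a,b)$-path of $G_X(a,b)$, which by Observation~\ref{obs:PathLengthEven} has even length; therefore $R_1$ itself has odd length and, by the alternation starting from $\alpha$ on $xu$, it enters $b$ along an $\alpha$-edge. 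The same conclusion holds for $T_1$.

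Properness then delivers the contradiction: $b$ has at most one incident $\alpha$-edge, so $R_1$ and $T_1$ must enter $b$ through the same edge $pb$, giving a common penultimate vertex $p$. Since $R_1$ is $(\alpha,\beta)$-colored and $T_1$ is $(\alpha,\gamma)$-colored, the edges by which they reach $p$ are colored $\beta$ and $\gamma$ respectively, and together with $pb$ (colored $\alpha$) these are three distinct edges of $G_X(a,b)$, whence $deg_{G_X(a,b)}(p)\ge 3$. But $p$ is a neighbor of $b$, which sits on the high-degree side of the bipartition, forcing $p$ onto the degree-$2$ side, a contradiction. I expect the main obstacle to be precisely this confinement step: without the observation that candidate colors cannot be routed through $a$, a path could leak into $Y$, where bipartiteness, and hence the parity control, is lost. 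Finally, the sub-case $deg(b)=2$ in $G_X(a,b)$ lies outside the scope of Observation~\ref{obs:PathLengthEven}; there $b$ has a unique neighbor $y$ in $X$, both paths are forced through $y$, and I would close it by a separate, more direct argument using properness at $y$ together with the minimality of $X$.
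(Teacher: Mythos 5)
Your architecture is a legitimate reshuffling of the paper's own argument: the paper first rules out both paths entering $b$ on an $\alpha$-edge (the common penultimate vertex would be a degree-$2$ vertex of $X$ forced to carry both a $\beta$- and a $\gamma$-edge) and then gets the parity contradiction from Observation~\ref{obs:PathLengthEven}, whereas you use parity to force an $\alpha$-entry and then apply the degree count at the penultimate vertex. That part is fine. The genuine gap is the premise on which your confinement step rests. You assume that $\beta$ and $\gamma$ are candidate colors for $xa$, i.e.\ $\beta,\gamma\notin F_a$, and you use $\beta\notin F_a$ to keep $a$ off the initial segment of $R$. But the claim is not invoked under that hypothesis. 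In its first application there is \emph{no} candidate color for $xa$: there $F_{ax}=\{\alpha\}$ and $|F_{ax}\cup F_{xa}|=\Delta$, so $F_a$ consists of every color other than $\alpha$, and in particular $\beta,\gamma\in F_a$. In the second application only $\gamma$ is a candidate, while $\beta$ is an arbitrary third color that may lie in $F_a$. So the fact you lean on is simply unavailable, and your confinement argument collapses exactly where you yourself identify the main obstacle to be. The confinement has to come from the structure instead: every neighbor of $a$ inside $X$ has degree $2$, so a bichromatic path travelling outward from $x$ can enter $a$ only along an edge of $a$ colored $\alpha$ whose other end is a degree-$2$ vertex of $X$; in the first invocation $\alpha\notin F_a$ kills this, and in the second the unique $\alpha$-edge $av$ at $a$ has $F_{av}=\{\alpha,\gamma\}$ when $v\in X$, so the $(\alpha,\beta)$-path cannot traverse $v$. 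You deserve credit for seeing that confinement needs an argument at all (the paper treats it as implicit), but the argument you supply is not the right one.

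Two further points. Your closing case $deg_{G_X(a,b)}(b)=2$ is only promised, not proved, and the minimality of $X$ plays no role in it: both paths must then enter $b$ through the single edge $yb$, whose color lies in $\{\alpha,\beta\}\cap\{\alpha,\gamma\}=\{\alpha\}$, and the same penultimate-vertex count you use in the main case (if $deg(y)=2$ then $y$ cannot carry a $\gamma$-edge in addition to its $\alpha$- and $\beta$-edges; if $deg(y)\ge 3$ then the outward alternation forces a second $\alpha$-edge at $y$, violating properness) finishes it — this is precisely the paper's ``both paths arrive on an $\alpha$-edge'' branch. Finally, your sentence that $a$ occurring on $R$ ``is impossible once $R$ reaches $b$'' is overstated ($a$ could appear on $R$ beyond $b$); what you actually need, and what your segment $R_1$ correctly isolates, is only that $a$ does not precede the first visit to $b$.
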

		
		\begin{proof}
			By way of contradiction, assume that both $R$ and $T$ reach the vertex $b$. Since $G$ does not have an edge whose end vertices have degree $2$ and $G_X(a,b)$ is $2$-sparse, the alternate vertices in $R$ and $T$ are of degree 2. These paths start from a degree 2 vertex $x$ such that the edge from a degree 2 vertex to a higher degree vertex is colored $\alpha$ and the edge from a higher degree vertex to a degree 2 vertex is colored $\beta$ and $\gamma$ with respect to $R$ and $T$. Suppose $R$ and $T$ reach the vertex $b$ through an edge colored $\alpha$. This implies that $deg(b) \ge 3$. We can infer that any neighbor of $b$ in $X$ is of degree 2. Let $r$ be a neighbor of $b$ in $X$ such that $c'(rb)=\alpha$. Since $r$ is of degree 2, $r$ can see at most one color in $\{\beta,\gamma\}$, a contradiction to our assumption that both $R$ and $T$ reach the vertex $b$. Thus we can infer that $R$ and $T$ can not reach the vertex $b$ through an edge colored $\alpha$ indicating that $R$ and $T$ reach the vertex $b$ through edges colored $\beta$ and $\gamma$ respectively and hence $deg(b) \ge 3$. Since we already have that $deg(a) \ge 3$, $R$ (or $T$) together with the edge $(a,x)$ is an $(a,b)$-path of odd length in $G_X(a,b)$, a contradiction to Observation~\ref{obs:PathLengthEven}. Thus we can infer that either $R$ or $T$ does not reach the vertex $b$.
		\end{proof}
		
		Suppose there is no candidate color for the edge $xa$. Hence $|F_{ax} \cup F_{xa}| = \Delta$, which implies $F_{ax} \cap F_{xa} = \phi$. Let $\beta$ and $\gamma$ be two colors other than $\alpha$. Clearly there is no $(\alpha,\beta,xa)$-critical path and no $(\alpha,\gamma,xa)$-critical path in $G$ since $F_{ax} \cap F_{xa} = \phi$. Let $R$ and $T$ be the $(\alpha,\beta)$-maximal bichromatic path and the $(\alpha,\gamma)$-maximal bichromatic path starting from the vertex $x$. Then by Claim~\ref{clm:TwoPaths}, either $R$ or $T$ does not reach $b$. Without loss of generality assume that $R$ does not reach $b$. Since $R$ does not reach $a$ as well as $b$, $R$ is completely in $G_X(a,b)$ which is $2$-sparse. Note that since $G_X(a,b)$ is $2$-sparse, the alternate vertices in $R$ are of degree 2 with degree of $x$ being equal to 2 and all the neighbors of any higher degree vertex in $R$ are of degree 2. Further, we have $N(x)=\{u,a\}$, $deg(a) \ge 3$ and $a \notin V(R)$. Also since the edge $ax$ is not colored in $c'$, the path $R$ and the coloring $c'$ satisfy the conditions required by Lemma~\ref{lem:BichromaticPathColorExchange}. Hence, we obtain an acyclic partial edge coloring $c$ of $G$ with a valid color for the edge $xy$ as per Lemma~\ref{lem:BichromaticPathColorExchange}. Since all the edges of $G$ have been colored, $c$ is an acyclic edge coloring of $G$.
		
		On the other hand, suppose there exists a candidate color $\gamma$ for the edge $xa$. If there is no $(\alpha,\gamma,xa)$-critical path in $G$, then $\gamma$ is also valid and we are done. Hence, we can assume that there exists an $(\alpha,\gamma,xa)$-critical path $P$ in $G$. Let $v$ be the neighbor of the vertex $a$ along the path $P$. Observe that $c'(av)=\alpha$ and $\gamma \in F_{av}$. Let $\beta$ be a color other than $\alpha$ and $\gamma$. Let $Q$ be the $(\alpha,\beta)$-maximal bichromatic path starting from the vertex $x$.
		
		Assume that $Q$ does not reach the vertex $b$. If $Q$ reaches $a$ through an edge colored $\beta$, then we have an odd cycle in $G_X(a,b)$ which is $2$-sparse, a contradiction to Observation~\ref{obs:2sparseBipartite}. On the other hand, if $Q$ reaches $a$ through an edge colored $\alpha$, then the edge $av \in Q$ which implies $v \in X$. Since $v \in N(a)$, $deg(v)=2$. Therefore, $F_{av}=\{\beta\}$, a contradiction to the fact that $\gamma \in F_{av}$. Thus we can infer that $Q$ does not reach $a$ as well as $b$. Therefore, $Q$ is entirely in $G_X(a,b)$ which is $2$-sparse. Hence, the alternate vertices in $Q$ are of degree 2 with degree of $x$ being equal to 2 and all the neighbors of any higher degree vertex in $Q$ are of degree 2. Further, we have $N(x)=\{u,a\}$, $deg(a) \ge 3$ and $a \notin V(Q)$. Also since the edge $ax$ is not colored in $c'$, the path $Q$ and the coloring $c'$ satisfy the conditions required by Lemma~\ref{lem:BichromaticPathColorExchange}. Hence, we obtain an acyclic partial edge coloring $c$ of $G$ with a valid color for the edge $xy$ as per Lemma~\ref{lem:BichromaticPathColorExchange}. Since all the edges of $G$ have been colored, $c$ is an acyclic edge coloring of $G$.
		
		Now, we assume that $Q$ reaches the vertex $b$. Note that since $P$ is an $(\alpha,\gamma,xa)$-critical path, it is also an $(\alpha,\gamma)$-maximal bichromatic path starting from $x$. Hence by Claim~\ref{clm:TwoPaths}, $P$ does not reach the vertex $b$ implying that $P$ is entirely in $G_X(a,b)$ which is $2$-sparse. Therefore, any edge from $G_Y(a,b)$ incident on the vertex $a$ can not be colored $\alpha$. Let $z$ and $w$ be the successors of the vertex $u$ along $P$ and $Q$ respectively.
		
		Suppose $w \ne b$. Now, we perform a color exchange with respect to the edges $uz$ and $uw$ to obtain a partial coloring $c''$ of $G$ corresponding to the subgraph $G \setminus xa$. Since $F_{uw}=F_{uz}=\{\alpha\}$, the color exchange is valid. Note that by this color exchange we have removed the $(\alpha,\gamma,xa)$-critical path and since $F_{uw}=\{\alpha\}$, there is no new $(\alpha,\gamma,xa)$-critical path formed. Therefore, we can infer that $\gamma$ is a valid color for the edge $xa$ in $c''$. Thus we can obtain an acyclic edge coloring $c$ of $G$ by assigning the color $\gamma$ to the edge $xa$.
		
		On the other hand, suppose $w=b$. This implies that $y=u$, $deg(b)=2$ and $c'(yb)=c'(uw)=\beta$. Thus $y$ is the unique neighbor of $b$ in $X$, implying that $(X',Y',a,y)$ with $X' = X \setminus y$ and $Y' = Y \cup \{b\}$ is a split in $G$. Note that $deg(a) \ge 3$ and $deg(y) \ge 3$. Further, $G_{X'}(a,y)$ is $2$-sparse, since $G_X(a,b)$ is $2$-sparse. Therefore, if $G_{X'}(a,y)$ is not isomorphic to $K_{2,t}$ for any $t \ge 3$, then $(X',Y',a,y) \in S$, a contradiction to the minimality of $X$ in $S$. Hence, $G_{X'}(a,y)$ is isomorphic to $K_{2,t}$ for some $t \ge 3$. Thus we can infer that any neighbor of $y$ in $X'$ is also a neighbor of $a$. Hence, $uz=yz$ is colored $\gamma$ and since $c'(av)=\gamma$ and $v \in X$, we have $z=v$. (refer Figure~\ref{fig:DeltaAboveThree}). Further, the proof is divided into two subcases based on the value of $\Delta(G)$.
		
		\textbf{Subcase-i:} $\Delta(G) \ge 4$.
		
		In this subcase, we have a color $\eta \notin \{\alpha,\beta,\gamma\}$. If no edge incident on the vertex $y$ is colored $\eta$, then we change the color of the edge $xy$ to $\eta$ and assign the color $\gamma$ to the edge $xa$ to get a coloring $c$ of $G$. Since $c(az)=\alpha$, a new $(\gamma,\eta)$-bichromatic cycle is not formed in the coloring $c$. Thus $c$ is the required acyclic edge coloring of $G$. Hence, we can assume that there exists a vertex $k \in N(y) \cap X$ such that $c'(yk)=\eta$. Since any neighbor of $y$ in $X$ is also a neighbor of the vertex $a$, the vertices $k$ and $z$ are adjacent to the vertex $a$. Since $P$ is an $(\alpha,\gamma,xa)$-critical path, $c'(az)=\alpha$. Therefore, $c'(ak) \ne \alpha$. Also, since $\gamma$ is a candidate color for the edge $xa$, $c'(ak) \ne \gamma$ (refer Figure~\ref{fig:DeltaAboveThree}). Now, we perform a color exchange with respect to the edges $yx$ and $yk$ to obtain a partial coloring $c''$ of $G$ corresponding to the subgraph $G \setminus xa$. The coloring $c''$ is proper because $c''(ak) = c'(ak) \ne \alpha$. Further, since $c''(ak) \ne \gamma$, there is no $(\alpha,\gamma,xa)$-critical path with respect to $c''$. Since $c''(az)=\alpha$, there is no new $(\gamma,\eta,xa)$-critical path formed by the color exchange. Therefore, we can infer that $\gamma$ is a valid color for the edge $xa$ in $c''$. Hence, we can obtain an acyclic edge coloring $c$ of $G$ by assigning the color $\gamma$ to the edge $xa$.
		
		\begin{figure}[h]
			\begin{minipage}{0.58\textwidth}
				\centering
				\begin{tikzpicture}
					\begin{scope}[every node/.style={circle,draw,inner sep=0pt,minimum size=2.5ex}]
						\node (k) at (0,2) {$k$};
						\node (z) at (1.5,2)  {$z$};
						\node (x) at (3,2)  {$x$};
						\node (y) at (4,0) {$y$};
						\node (a) at (5.5,4)  {$a$};
						\node (b) at (5.5,0)  {$b$};
					\end{scope}
					\draw (k) to node[below]{$\eta$} (y);
					\draw (z) to node[right]{$\gamma$} (y);
					\draw (x) to node[right]{$\alpha$} (y);
					\draw (k) to (a);
					\draw (z) to node[below]{$\alpha$} (a);
					\draw (x) to (a);
					\draw (y) to node[above]{$\beta$} (b);
					\draw [dotted] (k) to (z);
				\end{tikzpicture}
				\caption{$G[X \cup \{a,b\}]$ when $\Delta \ge 4$}
				\label{fig:DeltaAboveThree}
			\end{minipage}
			\begin{minipage}{0.42\textwidth}
				\centering
				\begin{tikzpicture}
					\begin{scope}[every node/.style={circle,draw,inner sep=0pt,minimum size=2.5ex}]
						\node (v) at (1.5,4) {$v$};
						\node (x) at (1.5,2.5)  {$x$};
						\node (a) at (0,1.5)  {$a$};
						\node (y) at (3,1.5) {$y$};
						\node (p) at (0,0)  {$p$};
						\node (b) at (3,0)  {$b$};
					\end{scope}
					\draw (v) to node[left]{$\beta$} (a);
					\draw (x) to node[below]{$\gamma$} (a);
					\draw (v) to node[right]{$\gamma$} (y);
					\draw (x) to node[below]{$\alpha$} (y);
					\draw (a) to node[right]{$\alpha$} (p);
					\draw (y) to node[left]{$\beta$} (b);
				\end{tikzpicture}
				\caption{$G[X'' \cup \{p,b\}]$ when $\Delta=3$}
				\label{fig:DeltaThree}
			\end{minipage}
		\end{figure}
		
		\textbf{Subcase-ii:} $\Delta(G)=3$.
		
		In this subcase, $G_{X'}(a,y)$ is isomorphic to $K_{2,3}$. Therefore, clearly $v=z$ is the common neighbor of $a$ and $y$ in $X'$ other than $x$. Let $p$ be the unique neighbor of $a$ in $Y'$. Now for this subcase, we will color separately by redefining the split and the coloring. Consider a split $(X'',Y'',p,b)$ where $X''=\{x,v,y,a\}$ and $Y'' = Y \setminus p$. It is easy to see that $G_{X''}(p,b)$ is $2$-sparse, since $G_{X}(a,b)$ is $2$-sparse. Also note that $deg(x)=2$, $deg(v)=2$, $deg(a)=3$ and $deg(y)=3$. Now consider the graph $G''=G_{Y''}(p,b)$. Note that $G''$ is chordless and has less than $m$ edges. Thus by I.H., we obtain an acyclic edge coloring $c''$ of $G''$ with $\Delta$ colors, since $\Delta(G'') \le \Delta$. Let $w$ be the marker vertex in $G_{Y''}(p,b)$. Let $c''(pw)=\alpha$ and $c''(bw)=\beta$. Let $\gamma$ be a color other than $\alpha$ and $\beta$. Now we try to extend $c''$ to a coloring $c$ of $G$. For any edge $e$ in $G'' \setminus w$, assign $c(e)=c''(e)$. Now, assign $c(pa)=c''(pw)=\alpha$, $c(by)=c''(bw)=\beta$, $c(av)=\beta$, $c(yx)=\alpha$ and $c(ax)=c(yv)=\gamma$ (refer Figure \ref{fig:DeltaThree}). It is easy to see that there are no bichromatic cycles in $G$ with respect to the coloring $c$, implying $c$ to be an acyclic edge coloring of $G$.
		
		Therefore, in any case, we are able to color the graph $G$ with $\Delta$ colors, which marks the completion of the proof of Theorem~\ref{thm:ACIChordless}.
	\end{proof}
	
	\section{Algorithm and Complexity Analysis}
	In this section, we provide the sketch of a polynomial time algorithm to acyclically color a chordless graph $G$ with $a'(G)$ colors along with the complexity analysis of the same. As usual, $n$ and $m$ denote the number of vertices and edges of the input graph $G$ respectively. If $\Delta=1$ or $\Delta=2$ with $G$ being acyclic, then $G$ is either a matching or a set of paths. Note that both of these structures can be colored using $a'(G)=\Delta$ colors trivially. If $\Delta=2$ with $G$ being non-acyclic, then each component in $G$ is either a path or a cycle with at least one component being a cycle. Note that a cycle requires three colors and a path requires two colors for acyclic edge coloring. Therefore, $G$ can be colored using $a'(G)=\Delta+1$ colors. Therefore, we can assume that $\Delta \ge 3$. By Remark~\ref{rem:ConnComp}, we assume that $G$ is connected. We can also assume that $G$ is $2$-connected (and hence $\delta(G) \ge 2$), otherwise we can use the linear-time algorithm by \citet{Hopcroft1973Alg4GraphManip} to compute the $2$-connected components of $G$, and the reconstruction of the coloring from the blocks to $G$, is simple. If $G$ has an edge incident on both degree 2 vertices, then we can contract the edge and obtain the coloring of the resultant graph. Note that extending this coloring to the graph $G$ is simple. Hence, we contract all the edges of $G$ that are incident on both degree 2 vertices. Note that the colorings (when $\Delta \le 2$) and all the extensions that are discussed in the paragraph can be done in linear time. Hence, we have a $2$-connected graph $G$ with $\Delta \ge 3$ and no edge incident on both degree 2 vertices.
	
	First check if the graph $G$ is $2$-sparse which can be done in linear time. If $G$ is $2$-sparse, then let $xy$ be an uncolored edge in the $2$-sparse graph. Recall that we have an assumption that $G$ does not have an edge whose end vertices have degree $2$. Thus either $deg(x)=2$ or $deg(y)=2$. Without loss of generality assume that $deg(x)=2$. Now, we obtain a maximal bichromatic path $Q$ starting from $x$ which does not reach $y$ (existence is proved in Case-i of Section~\ref{prf:ACIChordlessThm}) and then obtain the partial acyclic edge coloring of the graph in which the edge $xy$ is also colored, as per the strategy in Lemma~\ref{lem:BichromaticPathColorExchange}. Note that the above mentioned step can be done in linear time $O(m)$ since this is a mere combination of adjacent color checks and recoloring along a path. We repeat this step for each edge in the $2$-sparse graph $G$. Therefore, the coloring of $2$-sparse graph can be done in $O(m^2)$ time.
	
	Otherwise if $G$ is not $2$-sparse, then let $(X,Y,a,b)$ be a split in $G$ satisfying the properties in Lemma~\ref{lem:SpecialSplit}. Now, we choose a neighbor of the vertex $a$, say $x$ and color the edge $ax$ assuming a coloring of $G \setminus ax$. To do that, we obtain a maximal bichromatic path $P$ starting from $x$ which does not reach $a$ and also does not reach $b$ (existence follows from Claim \ref{clm:TwoPaths}) and then obtain the partial acyclic edge coloring of the graph in which the edge $ax$ is also colored, as per the strategy in Lemma~\ref{lem:BichromaticPathColorExchange}. This step (obtaining a color for the edge $ax$) can be done in linear time $O(m)$ since this is a mere combination of adjacent color checks and recoloring along a path.
	
	Now we analyze the time required to get the desired split. We are going to show that such a split can be obtained in $O(n^3)$ time. For each pair of non-adjacent vertices $(a,b)$ in $G$, let $S$ be the set of all components $C$ in $G \setminus \{a,b\}$ that are $2$-sparse in the graph $G[V(C) \cup \{a,b,w\}]$ ($w$ is a marker vertex which is adjacent to both $a$ and $b$). Let $C_1$ be the set of all trivial components in $S$. Let $C_2$ be the set of all non-trivial components in $S$ in which every neighbor of $a$ and every neighbor of $b$ in $G[V(C) \cup \{a,b,w\}]$ is of degree 2. Let $C_3$ be the set of all non-trivial components in $S$ in which there exists some neighbor of $a$ or $b$ in $G[V(C) \cup \{a,b,w\}]$ of degree at least 3.
	
	If $|C_1 \cup C_2| \ge 2$ with $C_2 \ne \phi$, then let $C'$ and $C''$ be any two components in $C_1 \cup C_2$ with at least one of them in $C_2$. Now, assign $X = V(C') \cup V(C'')$ and $Y = V(G) \setminus (X \cup \{a,b\})$. Hence, $(X,Y,a,b)$ is the required split with $deg(a) \ge 3$ and $deg(b) \ge 3$ in $G_X(a,b)$. Otherwise if $|C_1 \cup C_2| < 2$ and $C_2 \ne \phi$, then let $\tilde{C} \in C_2$. If $a$ and $b$ has at least two neighbors in $\tilde{C}$, then assign $X = V(\tilde{C})$ and $Y = V(G) \setminus (X \cup \{a,b\})$ and hence $(X,Y,a,b)$ is the required split with $deg(a) \ge 3$ and $deg(b) \ge 3$ in $G_X(a,b)$. Suppose either $a$ or $b$ has a unique neighbor in $\tilde{C}$. If both $a$ and $b$ have a unique neighbor in $\tilde{C}$, then discard the pair $(a,b)$. Hence, we have that exactly one among $a$ or $b$ has a unique neighbor in $\tilde{C}$. Without loss of generality let $b$ has a unique neighbor $b'$ in $\tilde{C}$. Note that since there is no edge incident on both degree 2 vertices in $G$, $deg(b') \ge 3$. Now, consider the split $(X',Y'a,b')$ where $X' = X \setminus b'$ and $Y' = Y \cup \{b\}$. If $G_{X'}(a,b')$ is not isomorphic to $K_{2,t}$ for any $t \ge 3$, then $(X',Y',a,b')$ is the required split with $deg(a) \ge 3$ and $deg(b') \ge 3$ in $G_{X'}(a,b')$. Otherwise if $G_{X'}(a,b')$ is isomorphic to $K_{2,t}$ for some $t \ge 3$, then $(X,Y,a,b)$ is the required split with $deg(a) \ge 3$ and $deg(b)=2$ with $X$ being minimal.
	
	Otherwise if $C_2=\phi$ and $C_3 \ne \phi$, then let $C$ be a component in $C_3$. If both $a$ and $b$ have unique neighbor in $X$, then discard $C$ and move to the next component in $C_3$, if it exists. Otherwise exactly one of $a$ or $b$ (say $a$) has at least two neighbors in $C$. Let $b''$ be the unique neighbor of $b$ in $C$. Now, assign $X=V(C)$ and $Y = V(G) \setminus (X \cup \{a,b\})$. Note that since there is no edge incident on both degree 2 vertices in $G$, $deg(b'') \ge 3$. Now, consider the split $(X'',Y''a,b'')$ where $X'' = X \setminus b''$ and $Y' = Y \cup \{b\}$. If $G_{X''}(a,b'')$ is not isomorphic to $K_{2,t}$ for any $t \ge 3$, then $(X'',Y'',a,b'')$ is the required split with $deg(a) \ge 3$ and $deg(b'') \ge 3$ in $G_{X''}(a,b'')$. Otherwise if $G_{X''}(a,b'')$ is isomorphic to $K_{2,t}$ for some $t \ge 3$, then $(X,Y,a,b)$ is the required split with $deg(a) \ge 3$, $deg(b)=2$ and $X$ is minimal. If none of the above mentioned conditions hold, then discard the pair $(a,b)$. Continue this for each non-adjacent pair $(a,b)$ in $G$ until we get the desired split satisfying the properties in Lemma~\ref{lem:SpecialSplit}.
	
	Note that this step of verifying whether the given non-adjacent pair $(a,b)$ has a corresponding split in $G$ which is desired, can be done in linear time $O(n+m)$, which is the time required to characterize the components obtained by deleting the vertex pair from $G$. Also, it is easy to see that $m=O(n)$ for a chordless graph $G$. Therefore, since there are $O(n^2)$ such pairs, obtaining a split in $G$ satisfying the properties in Lemma~\ref{lem:SpecialSplit}, can be done in $O(n^3)$ time. It is clear that we need to find at most $n$ splits throughout the algorithm. Hence, the algorithm takes $O(n^4)$ time to find all the splits required throughout the algorithm.
	
	Recall that to color each edge we require $O(m)$ time irrespective of whether the graph is $2$-sparse or not. Thus, to color all the edges we require $O(m^2)$ time. Notice that to extend the coloring of the blocks to the whole graph, we require $O(m)$ time. It is easy to see that we perform at most $O(n)$ such extensions. Therefore, the extensions take at most $O(mn)$ time.
	
	Hence, the running time of the algorithm is $O(n^4+m^2+mn)$ in which the major contributing step is when we repeatedly find a split in $G$ satisfying the properties in Lemma~\ref{lem:SpecialSplit}. Since $m=O(n)$ for a chordless graph, the running time of the algorithm is $O(n^4)$.
	
	\bibliographystyle{plain}
	\bibliography{AcyclicChordless}
	
\end{document}